\newcommand \reg{\operatorname{reg}}
\newcommand\sing{\operatorname{Sing}}
\newcommand \pd{\operatorname{pdim}}
\newcommand \cmdef{\operatorname{cmdef}}
\newcommand \depth{\operatorname{depth}}
\newcommand \LL{\mathcal{L}}
\newcommand \KB{\mathbb{K}}
\newtheorem{theorem}{Theorem}[section]
\newtheorem{definition}[theorem]{Definition}
\newtheorem{lemma}[theorem]{Lemma}
\newtheorem{example}[theorem]{Example}
\newtheorem{question}[theorem]{Question}
\newtheorem{corollary}[theorem]{Corollary}
\newtheorem{con}[theorem]{Conjecture}
\newtheorem{remark}[theorem]{Remark}
\begin{document}
\title[binomial edge ideals of Levi graphs associated with curve arrangements]{Algebraic properties of binomial edge ideals of Levi graphs associated with curve arrangements}
\author[Rupam Karmakar]{Rupam Karmakar}
\email{rupammath91@gmail.com}
\author[Rajib Sarkar]{Rajib Sarkar}
\email{rajib.sarkar63@gmail.com}
\address{Stat-Math Unit, Indian Statistical Institute \newline \indent 203 B.T. Road, Kolkata--700108, India.}

\author[Aditya Subramaniam]{Aditya Subramaniam}
\address{Indian Institute of Science Education and Research Tirupati, Rami Reddy Nagar, Karakambadi Road, Mangalam (P.O.),
Tirupati, Andhra Pradesh, INDIA – 517507.}
\email{adityasubramaniam@labs.iisertirupati.ac.in / adisubbu92@gmail.com}

\begin{abstract}
In this article, we study algebraic properties of binomial edge ideals of Levi graphs associated with certain plane curve arrangements. Using combinatorial properties of Levi graphs, we discuss the Cohen-Macaulayness of binomial edge ideals of Levi graphs associated to some curve arrangements in the complex projective plane, like the $d$-arrangement of curves and the conic-line arrangements. We also discuss the existence of certain induced cycles in the Levi graphs of these arrangements and obtain lower bounds for the regularity of powers of the corresponding binomial edge ideals.

\end{abstract}

\keywords{Binomial edge ideals, Dimension, Regularity,  Curve Arrangements, Cohen-Macaulay, Levi graphs, Bipartite graphs}
\thanks{AMS Subject Classification (2020): 05E40, 13C14, 13C15, 14N10, 14N20}
\maketitle
\section{Introduction}

In the present article, we study the binomial edge ideals of Levi graphs associated to certain curve arrangements in the complex projective plane. We briefly recall the notion of binomial edge ideals corresponding to simple graphs below.

Let $G$ be a  simple graph with the vertex set $V(G) =[n]:= \{1, \ldots, n\}$ and the edge set $E(G).$ Let $S=
\KB[x_1, \ldots, x_{n}, y_1, \ldots, y_{n}]$ be the polynomial ring in $2n$ variables over an arbitrary field $\KB$. The binomial edge ideal of $G$, denoted by $J_G$,  is defined as 
$$J_G:=\langle x_i y_j - x_j y_i ~ : i < j \text{ and } \{i,j\}\in E(G) \rangle \subseteq S.$$ 

Binomial edge ideals were introduced by Herzog et al. \cite{HHHKR} in the study of conditional independence statements in Algebraic Statistics. These ideals were also independently introduced by Ohtani in \cite{oh} as an ideal generated by certain $2$-minors of a $(2 \times n)$-generic matrix corresponding to the edges of a graph on $n$ vertices.

There are several interesting directions in which binomial edge ideals are being studied. See \cite{Priya-survey,Madani-survey} for a comprehensive survey. One of the most important problems in the study of binomial edge ideals is to find a characterization of Cohen-Macaulay binomial edge ideals. For a sampling of the many results in this direction, see \cite{BMRS22,DAV,DAV2,EHH-NMJ,KM-CA,Rinaldo-BMS,Rinaldo-Cactus,RS-level,SS22}.

In this article, we continue this study by looking at binomial edge ideals of Levi graphs associated to some plane curve arrangements and understand their algebraic properties like Cohen-Macaulayness and regularity. Our primary motivation is \cite{PR22}, where homological properties of edge ideals of Levi graphs associated with certain plane curve arrangements are studied. We briefly recall the notion of Levi graphs associated with plane curve arrangements below.

Levi graphs are bipartite graphs that are naturally associated to plane curve arrangements with ordinary singularities. These graphs were introduced by Coxeter in \cite{Cox} and have been primarily studied in the case of line arrangements. For example, if we had a collection of points and lines in the projective plane, the associated Levi graph would be a bipartite graph with one vertex per point, one vertex per line, and an edge for every incidence between a point and a line. Levi graphs are important as they encode various properties of intersection posets  of curve arrangements with ordinary singularities. For example, these graphs play a central role in studying the freeness of line arrangements, as seen in the following well-known Terao's conjecture \cite{T092,HT81}:
\begin{con}(Terao's Conjecture)
 Let $\mathcal{C}, \mathcal{C}' \subset \mathbb{P}^2_{\mathbb{C}}$ be line arrangements and $G_{1},G_{2}$ be the associated Levi graphs. Assume that $G_1$ and $G_2$ are isomorphic. Then, if $\mathcal{C}'$ is free, then  $\mathcal{C}$ is free. 
\end{con}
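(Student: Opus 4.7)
The statement to be proved is Terao's Conjecture, one of the oldest and most celebrated open problems in the theory of hyperplane arrangements; it has resisted proof for over four decades and (to the best of my knowledge) this paper does not attempt to settle it, since the sequel concerns binomial edge ideals of Levi graphs rather than freeness itself. I would therefore not aim at a direct attack, but at a plan that converts the freeness hypothesis on $\mathcal{C}'$ into purely combinatorial data read off from $G_2$, and then transfers that data to $\mathcal{C}$ through the isomorphism $G_1 \cong G_2$.

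A natural skeleton is as follows. First, observe that the characteristic polynomial of a line arrangement is determined by its intersection lattice, and hence by its Levi graph, so the hypothesis $G_1 \cong G_2$ immediately yields $\chi(\mathcal{C},t) = \chi(\mathcal{C}',t)$. Second, invoke Terao's factorization theorem: since $\mathcal{C}'$ is free, there are exponents $d_1,\ldots,d_n$ with $\chi(\mathcal{C}',t) = \prod_i (t-d_i)$, and by the previous sentence the same integers are forced as the only candidate exponents for $\mathcal{C}$. Third, construct $n$ logarithmic derivations of $\mathcal{C}$ of degrees $d_1,\ldots,d_n$ starting from the Euler derivation, with the remaining generators obtained from incidence patterns encoded in $G_1$ (for instance, from cycles or complete bipartite subgraphs corresponding to multiple points). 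Fourth, verify Saito's criterion for $\mathcal{C}$: check that the coefficient determinant of these derivations equals a nonzero scalar multiple of the defining polynomial of $\mathcal{C}$, which then certifies freeness.

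The main obstacle is the third step, and it is precisely what makes Terao's conjecture hard. There is no known combinatorial recipe that, given only the Levi graph $G$ and a prescribed multiset of candidate exponents, produces an explicit basis of the module of logarithmic derivations; in fact, deciding whether the module of logarithmic derivations is determined up to isomorphism by $G$ is essentially equivalent to the conjecture itself. I would expect most of the work to go into identifying a combinatorial refinement of $G$ — perhaps an invariant visible on the binomial edge ideal $J_G$ or on associated syzygy modules, in the spirit of what the rest of this paper develops — that is rich enough to detect the existence of such derivations while still being forced by a graph isomorphism, so that Saito's criterion becomes verifiable from $G$ alone.
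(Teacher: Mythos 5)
This statement is labeled as a \emph{Conjecture} in the paper and is included only as motivation for studying Levi graphs; the paper offers no proof of it, since Terao's Conjecture is a long-standing open problem. Your proposal correctly recognizes this, and your sketch --- the Levi graph determines the intersection lattice and hence the characteristic polynomial, Terao's factorization theorem pins down the candidate exponents, and the missing step is a combinatorial construction of a basis of logarithmic derivations verifiable via Saito's criterion --- is an accurate account of both the standard strategy and the precise point at which it breaks down. There is nothing in the paper to compare your attempt against, and no gap to report beyond the one you already identify, namely that step three is essentially equivalent to the conjecture itself.
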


\vspace{2mm}

In this article, we consider Levi graphs of certain arrangements of plane curves of a fixed degree $d,$ known in the literature as $d$-arrangements and study their binomial edge ideals. These arrangements  are generalizations of line arrangements and first appeared in \cite{PRS} in the context of the bounded negativity conjecture. 
Levi graphs of $d$-arrangements were considered in \cite{PR22}, where  the authors show that most of the edge ideals of Levi graphs associated with $d$-arrangements are never Cohen-Macaulay. We prove an analogous result in the case of binomial edge ideals.

We also study binomial edge ideals of Levi graphs associated to conic-line arrangements with ordinary singularities in the complex projective plane. These arrangements were considered in  \cite{PSCo} in studying Harbourne constants and geography problem for log-surfaces.

Another homological property of interest in the study of homogeneous ideals in polynomial rings is the regularity of its powers. For edge ideals of Levi graphs associated to certain $d$-arrangements, Pokora and R\"{o}mer \cite{PR22} give bounds for the regularity of their powers. Motivated by their work, we give lower bounds for the regularity of powers of binomial edge ideals of Levi graphs associated to certain $d$-arrangements and conic-line arrangements by showing the existence of certain induced cycles in the associated Levi graphs.

The paper is organized as follows.

In Section \ref{Prelim}, we recall some preliminaries and notations that are used in later sections. In Section \ref{Sec: CM}, we study the Cohen-Macaulayness of binomial edge ideals of Levi graphs coming from certain plane curve arrangements. Firstly, we give an example of a point-line configuration such that the binomial edge ideal of the associated Levi graph is Cohen-Macaulay. Then, in Section \ref{subs-hir}, we study the binomial edge ideals of Levi graphs associated to Hirzebruch quasi-pencils and prove that they are never Cohen-Macaulay. Moreover, we compute the dimension of these binomial edge ideals and give a lower bound on their Cohen-Macaulay defect.

In Section \ref{sec: not-Cohen-Macaulay}, we show that  most of the binomial edge ideals of Levi graphs associated with $d$-arrangements and conic-line arrangements in the complex projective plane are never Cohen-Macaulay.

In Section \ref{Sec: Prop}, we prove that the Levi graphs of certain $d$-arrangements and conic-line arrangements have an induced $\mathbf{C}_6$ and give examples of $d$-arrangements whose Levi graphs have an induced cycle of maximum length. Using these results in Section \ref{Regu}, we obtain bounds for the regularity of powers of binomial edge ideals of Levi graphs coming from $d$-arrangements and conic-line arrangements.

\section{Acknowledgment}
The first and second authors thank the National Board for Higher Mathematics (NBHM), Department of Atomic Energy, Government of India for financial support through their postdoctoral fellowship. We are thankful to the anonymous referees for carefully reading the manuscript and making several suggestions that improved the exposition.
\section{Preliminaries}\label{Prelim}
\subsection{ Plane Curve arrangements and Levi graphs}
We recall some basic notations, definitions and known results that will be used throughout this article.

Firstly, we recall the notion of a $d$-arrangement of curves in the complex projective plane.

\begin{definition}
Let $\mathcal{C} = \{C_{1}, \dots , C_{k}\} \subset \mathbb{P}^{2}_{\mathbb{C}}$ be an arrangement of $k\geq 3$ curves in the complex projective plane. We say that $\mathcal{C}$ is a  \emph{$d$-arrangement} if
\begin{itemize}
\item all curves $C_{i}$ are smooth of the same degree $d\geq 1$;
\item the singular locus ${\rm Sing}(\mathcal{C})$ consists of only ordinary intersection points -- they look locally like intersections of lines.
\end{itemize}
\end{definition}

In particular, 1-arrangements are line arrangements and 2-arrangements are conic arrangements with ordinary intersection points. We have the following combinatorial count for a $d$-arrangement $\mathcal{C}$ :
\begin{align}
    d^2 {k \choose 2} = \sum_{p \in \sing(\mathcal{C})} {m_p \choose 2},
\end{align}
where $m_p$ denotes the multiplicity at $p$ i.e., the number of curves in $\mathcal{C}$ passing through $p \in \sing(\mathcal{C}).$ 
Also, we have for every $C_i\in \mathcal{C}$, 
\begin{align}\label{comb count}
    d^2(k-1)=\sum_{p\in \sing(\mathcal{C})\cap C_i}(m_p-1).
\end{align}
Let $t_r(\mathcal{C})$ be the number of $r$-fold points in $\sing{(\mathcal{C})}$ i.e., the number of points where exactly $r$ curves from $\mathcal{C}$ meet. So, if $s(\mathcal{C})$ is the total number of intersection points in $\mathcal{C}$, then $s(\mathcal{C})= \sum_{r \geq 2}t_r(\mathcal{C})$. We sometimes write $s, t_r$ in place of $s(\mathcal{C}), t_r(\mathcal{C})$ respectively if there is no confusion about $\mathcal{C}$. 
\vspace{2mm}

We now recall the notion of Levi graphs for $d$-arrangements.

\begin{definition}\label{df: d-arr}
Let $\mathcal{C}=\{C_{1}, \dots , C_{k}\} \subset \mathbb{P}^{2}_{\mathbb{C}}$ be a $d$-arrangement with $|\sing(\mathcal{C})|=s$. Then the associated \emph{Levi graph} $G = (V,E)$ is a bipartite graph with  $V : = V_{1} \cup V_{2} = \{x_{1}, \dots , x_{s}, y_{1}, \dots , y_{k}\}$,  where each vertex $x_i$ corresponds to an intersection point $p_i \in \sing(\mathcal{C})$, each vertex $y_j$ corresponds to the curve $C_j$ in $\mathcal{C}$ and vertices $x_i$, $y_j$ are joined by an edge in $E$ if and only if $p_i$ is incident with $C_j$. 
\end{definition} 

We next recall the notion of conic-line arrangements in the complex projective plane. These are special types of curve arrangements where all the curves do not have the same degree. 
\begin{definition}
    A conic-line arrangement $\mathcal{CL} = \{\ell_{1}, \dots ,\ell_{n}, C_{1}, \dots , C_{k}\} \subset \mathbb{P}^{2}_{\mathbb{C}}$ is an arrangement of $n$ lines and $k$ conics having only ordinary singularities i.e., the intersection points look locally as $\{x^{a}=y^{a}\}$ for some integer $a\geq 2.$
\end{definition}
By B\'ezout's theorem, we have the following combinatorial count:
\begin{align}
    4 {k \choose 2} + {n \choose 2} + 2kn = \sum_{r \geq 2}{r \choose 2}t_r,
\end{align}
where $t_r$ is the number of $r$-fold points in $\sing{(\mathcal{CL})}$ i.e., the number of points where exactly $r$ curves from $\mathcal{CL}$ meet.

Similar to the $d$-arrangement case, we can also associate a Levi graph to each conic-line arrangement.
\begin{definition}\label{df: conic line-arr}
Let $\mathcal{CL} = \{\ell_{1}, \dots ,\ell_{n}, C_{1}, \dots , C_{k}\} \subset \mathbb{P}^{2}_{\mathbb{C}}$ be a conic-line arrangement with $|\sing(\mathcal{CL})|=s$. Then the associated  \emph{Levi graph} $H = (W, F)$ is a bipartite graph with $W := W_1 \cup W_2 = \{x_1,\dots ,x_s, y_1,\dots , y_{n+k}\}$, where each vertex $x_i$ corresponds to an intersection point $p_i \in \sing(\mathcal{CL})$, each vertex $y_j$ corresponds to the line $\ell_j$ for $j=1,\dots,n$, each vertex $y_{n+j}$ corresponds to the conic $C_j$ for $j = 1,\dots,k$ in $\mathcal{CL}$ and vertices $x_i$,$y_j$ or $x_i, y_{n+j}$ are joined by an edge in $F$ if and only if  $p_i$ is incident with $\ell_j$ or $C_j$  for some $j.$ 
\end{definition} 

\subsection{Some basics from graph theory}
Now, we recall some basic facts from graph theory which will be used in studying binomial edge ideals. Let $G$ be a simple graph with the vertex set $V(G)$ and the edge set $E(G)$. A subgraph $G_{sub}$ of $G$ is said to be an \textit{induced subgraph} if for all $u, v \in V(G_{sub})$ such that $\{u,v\} \in E(G)$ implies that $\{u,v\} \in E(G_{sub})$. If $v\in V(G)$, then $G \setminus v$ denotes the induced subgraph on the vertex set $V(G) \setminus \{v\}$ and for $T\subseteq V(G)$, $G\setminus T$ denotes the induced subgraph on the vertex set $V(G)\setminus T$. For any subset $T \subseteq V(G)$, denote $\omega(T)$ to be the number of connected components of $G \setminus T$. A subset $T\subseteq V(G)$ is called a \emph{cutset} in $G$ if either $T=\emptyset$ or $T\neq \emptyset$ and $\omega(T\setminus \{v\})<\omega(T)$ for every vertex $v\in T$. We denote by $\mathcal{C}(G)$ the set of all cutsets for $G$.

For a vertex $v$ in $G$,
$N_G(v) := \{u \in V(G) :  \{u,v\} \in E(G)\}$ denotes the
\textit{neighborhood} of $v$ in $G$. The \textit{degree} of a vertex  $v$, denoted by $\deg_G(v)$, is
$|N_G(v)|$. 

We further recall the definition of cycle which will be used in Section \ref{Sec: CM} and more extensively in Section \ref{Sec: Prop}. A \textit{cycle} is a connected graph $G$ with $\deg_G(v)=2$ for all $v\in V(G)$.  We denote the cycle on $n$ vertices by $\mathbf{C}_n$ for $n\geq 3$. In particular, $\mathbf{C}_3$ is triangle and $\mathbf{C}_4$ is square. Note that by a cycle we mean it is an induced cycle.


\subsection{Regularity and projective dimension}

We now recall the definition of two important homological invariants which can be computed directly from the Betti table. One of them is the Castelnuovo-Mumford regularity which can be used to measure the complexity of the structure of a graded module over a polynomial ring or a coherent sheaf on a projective space. In \cite{Mumford66}, Mumford defined the regularity of a coherent sheaf on projective space and generalized the ideas of Castelnuovo. Later on, Eisenbud and Goto \cite{EG-84} extended the definition of regularity for modules. Let $M$ be a finitely generated graded $S$-module and $\beta_{i,i+j}(M)$ the graded Betti numbers. The \textit{projective dimension} of $M$ is defined as $\pd(M):=\max\{i : \beta_{i,i+j}(M) \neq 0 \text{ for some } j\}$
 and the \textit{Castelnuovo-Mumford regularity} (or simply, \textit{regularity}) of $M$ is defined as 
 $\reg(M):=\max \{j : \beta_{i,i+j}(M) \neq 0 \text{ for some } i\}.$ 
In Section \ref{Regu}, we give bounds on the regularity of powers of binomial edge ideals of Levi graphs associated to certain plane curve arrangements.
 
\section{Cohen-Macaulay binomial edge ideals}\label{Sec: CM}

Firstly, we recall some known results on binomial edge ideals.
Let $G$ be a simple graph and $T\in \mathcal{C}(G)$ be a cutset in $G$. Let $G_1,\cdots,G_{\omega(T)}$ be the connected components of $G\setminus T$ and for every $i$, $\tilde{G_i}$ denotes the
complete graph on the vertex set $V(G_i)$. Moreover, we set $P_T(G) := \big(\underset{i\in T} \cup \{x_i,y_i\}, J_{\tilde{G_1}},\cdots, J_{\tilde{G}_{\omega(T)}}\big)$. 

Using the description of $P_T(G),$ Herzog et al. \cite{HHHKR} proved that the binomial edge ideal $J_G =  \underset{T \in \mathcal{C}(G)}\cap
P_T(G) \subseteq S=\mathbb{K}[x_i,y_i \mid i \in V(G)]$ which, in particular, implies that $J_G$ is a radical ideal. As a consequence, they also obtained the following formula for the dimension of $S/J_G$:
\begin{theorem}(\cite[Corollaries 3.3 and 3.9]{HHHKR})\label{thm: dimension formula}
    Let $G$ be a simple graph with the vertex set $V(G)$. Then 
    \[
  \dim(S/J_G)=\max \{(|V(G)|-|T|)+\omega(T):T\in \mathcal{C}(G)\}.
    \] In particular, if $G$ is connected, then $\dim(S/J_G)\geq |V(G)|+1$.
\end{theorem}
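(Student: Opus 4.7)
The plan is to exploit the primary decomposition $J_G = \bigcap_{T\in\mathcal{C}(G)} P_T(G)$ recorded in the paragraph just above the statement. Since each $P_T(G)$ is prime (as I will argue in a moment) and $J_G$ is their intersection, we get
\[
\dim(S/J_G) \;=\; \max_{T\in \mathcal{C}(G)} \dim(S/P_T(G)),
\]
so the task reduces to computing $\dim(S/P_T(G))$ for a single cutset $T$.

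Fix $T\in\mathcal{C}(G)$ with connected components $G_1,\ldots,G_{\omega(T)}$ of $G\setminus T$, and write $n_i := |V(G_i)|$. The first step is to note that $P_T(G)$ is generated by the $2|T|$ linear forms $\{x_i,y_i : i\in T\}$ together with the ideals $J_{\tilde{G_i}}$, each of which lives in its own disjoint block of variables. Killing the variables indexed by $T$ therefore gives a tensor product decomposition
\[
S/P_T(G) \;\cong\; \bigotimes_{i=1}^{\omega(T)} \mathbb{K}[x_j,y_j : j\in V(G_i)]\big/J_{\tilde{G_i}},
\]
and since the factors involve disjoint variables their Krull dimensions add:
\[
\dim(S/P_T(G)) \;=\; \sum_{i=1}^{\omega(T)} \dim\bigl(\mathbb{K}[x_j,y_j : j\in V(G_i)]/J_{\tilde{G_i}}\bigr).
\]

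The key input I would invoke is the classical fact that for a complete graph $K_m$, the ideal $J_{K_m}$ is the ideal of $2\times 2$ minors of a generic $2\times m$ matrix; its vanishing locus is the rational normal scroll parametrized by $\bigl((a_1,\ldots,a_m),\lambda\bigr)\mapsto \bigl(a_j, \lambda a_j\bigr)_{j=1}^m$ (on the open chart where some $a_j\neq 0$), and hence has Krull dimension $m+1$. Since the quotient is moreover a domain, this also supplies the primality of $P_T(G)$ used at the start. Plugging in gives $\dim(S/P_T(G)) = \sum_{i=1}^{\omega(T)}(n_i+1) = (|V(G)|-|T|)+\omega(T)$, and maximizing over $T$ yields the formula. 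The ``in particular'' clause follows by observing that $T=\emptyset$ is a cutset whenever $G$ is connected, in which case $\omega(\emptyset)=1$ and the bound reads $|V(G)|+1$.

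The only nontrivial step is the dimension computation for $S/J_{K_m}$; everything else is organizational. If I wanted to avoid quoting determinantal variety theory, the explicit parametrization written above works directly, since the map $\mathbb{K}^{m+1}\dashrightarrow V(J_{K_m})$ sending $(a_1,\ldots,a_m,\lambda)$ to the matrix with rows $(a_1,\ldots,a_m)$ and $(\lambda a_1,\ldots,\lambda a_m)$ is birational onto its image. I do not anticipate any combinatorial obstacle: the definition of a cutset is used only to enumerate the components $G_i$ of $G\setminus T$, and $|V(G)|-|T| = \sum_i n_i$ is immediate from the partition of $V(G)\setminus T$ into components.
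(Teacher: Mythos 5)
Your proof is correct and is essentially the argument behind \cite[Lemma 3.1 and Corollaries 3.3, 3.9]{HHHKR}, which this paper quotes without reproving: one reads off $\dim(S/P_T(G)) = (|V(G)|-|T|)+\omega(T)$ from the decomposition into blocks of disjoint variables and the fact that $\mathbb{K}[x_j,y_j : j\in V(G_i)]/J_{\tilde{G_i}}$ is the coordinate ring of the cone over the Segre variety $\mathbb{P}^1\times\mathbb{P}^{n_i-1}$, of dimension $n_i+1$, and then maximizes over the intersection $J_G=\bigcap_{T}P_T(G)$. Two cosmetic remarks only: primality of $P_T(G)$ is not actually needed, since $\dim(S/\bigcap_i I_i)=\max_i\dim(S/I_i)$ holds for any finite intersection of ideals, and the variety you parametrize is the Segre cone (rank~$\leq 1$ locus of a generic $2\times m$ matrix) rather than a rational normal scroll --- neither point affects the argument.
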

Next, we recollect the notion of Cohen-Macaulay rings.
\begin{definition}
    Let $I\subseteq S$ be a graded ideal. Then the quotient ring $S/I$ is called Cohen–Macaulay if $\depth(S/I) =\dim(S/I)$. The Cohen-Macaulay defect, denoted by $\cmdef(S/I)$, is defined by $\dim(S/I)-\depth(S/I).$ We say a graph $G$ is Cohen-Macaulay when $S/J_G$ is a Cohen-Macaulay ring.
\end{definition}

Using the dimension computation, we show the Cohen-Macaulayness of the Levi graph associated to the point-line configuration $\mathcal{L}$ in Example \ref{exam: point-line-config}. We also use Theorem \ref{thm: dimension formula} to study the dimension of binomial edge ideals of Levi graphs associated to the Hirzebruch quasi-pencils.

We now give an example of a point-line configuration whose associated Levi graph $G$ is Cohen-Macaulay. 
\begin{example}\label{exam: point-line-config}
    Consider a point-line configuration $\mathcal{L}$ in the plane consisting of $2$ points $p_1, p_2$ and $2$ lines $\ell_1, \ell_2$ such that there is exactly one double intersection point $p_1$ and the point $p_2$ lies on one of the lines (which is not an intersection point!). Let $G$ be the Levi graph with $V(G)=\{x_1,x_2,y_1,y_2\}$, where $x_i$  represents the point $p_i$ and $y_i$ represents the line $\ell_i$ for $i=1,2.$
     Then the edge set is $E(G)=\{\{x_1, y_1\},\{x_1,y_2\},\{x_2, y_2\}\}.$ One can observe that $G$ is the path $P_4$. Therefore, by the proof of \cite[Corollary 2.3]{MM13}, $J_G$ is complete intersection; hence, $S/J_G$ is Cohen-Macaulay. In fact, by using Macaulay2 \cite{M2}, we can compute the Betti table of $S/J_G$ as follows:
    \begin{verbatim}
            0 1 2 3
     total: 1 3 3 1
         0: 1 . . .
         1: . 3 . .
         2: . . 3 .
         3: . . . 1
    \end{verbatim}
    By the above Betti table, we have $\pd(S/J_G)=3$ and hence, by the Auslander-Buchsbaum formula, $\depth(S/J_G)=5$. It can be shown that the set of all \textit{cutsets} is given by 
$$\mathcal{C}(G)=\{\emptyset, \{x_1\},\{y_2\}\}.$$
 Observe that $\omega(\{x_1\})=\omega(\{y_2\})=2$.
 Therefore by Theorem \ref{thm: dimension formula}, $\dim(S/J_G)=5$. Hence $S/J_G$ is Cohen-Macaulay.
\end{example} 

However, in Sections \ref{subs-hir} and \ref{sec: not-Cohen-Macaulay}, we show cases of Levi graphs associated to plane curve arrangements that are not Cohen-Macaulay.

\subsection{Binomial edge ideals associated with Hirzebruch quasi-pencils}\label{subs-hir}
In this section, we study the binomial edge ideals of Levi graphs associated with Hirzebruch quasi-pencils. Hirzebruch quasi-pencils are line arrangements where the total number of singular points $s$ is equal to the number of lines $k$.
In this context, de Bruijn and Erd\"{o}s \cite{Erdos-Bru} proved a complete classification of
line arrangements with $s = k$.
\begin{theorem}(de Bruijn and Erd\"{o}s)
    Let $\mathcal{L}\subseteq \mathbb{P}_{\KB}^2$ be an arrangement of $k \geq 3$ lines in the planes with $s$ intersection points such that $t_k=0$. Then $s\geq k$ and the equality holds if and only if $\LL$ is one of the following:
    \begin{enumerate}
        \item  A Hirzebruch quasi-pencil with the intersection points satisfying $t_{k-1}=1$ and $t_2=k-1$;
        \item A finite projective plane arrangement consisting of $q^2+q+1$ points and $q^2+q+1$ lines, where $q=p^n$ for some prime $p$.
    \end{enumerate}
\end{theorem}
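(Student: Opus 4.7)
The plan is to translate the statement to its projective dual and then apply a linear-algebraic argument of Fisher / Motzkin / de Bruijn--Erd\"os on the point-line incidence matrix. Under projective duality, each line $\ell_{j}$ of $\mathcal{L}$ becomes a dual point $P_{j}\in (\mathbb{P}^{2}_{\KB})^{*}$, and each singular point $p_{i}$ of multiplicity $r_{i}=m_{p_{i}}$ becomes a line in the dual plane through the $r_{i}$ dual points corresponding to the lines of $\mathcal{L}$ meeting at $p_{i}$. The assumption $t_{k}=0$ translates to: the $k$ dual points are not all collinear. The statement therefore reduces to showing that a non-collinear configuration of $k$ points in the projective plane determines at least $k$ distinct lines (where a line here means one containing at least two of the points), with equality exactly when the configuration is a near-pencil or a finite projective plane of order $q$.

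For the inequality $s\ge k$, the key observation is that non-collinearity forces every dual point to lie on at least two configuration lines: if $P$ lay on a unique line $L$, then every other dual point would also lie on $L$, contradicting non-collinearity. Let $A$ be the $k\times s$ point-line incidence matrix. Since any two distinct dual points lie on exactly one line of our collection, a direct computation gives
\[
AA^{T}\;=\;J\;+\;\operatorname{diag}(r_{P_{1}}-1,\,\dots,\,r_{P_{k}}-1),
\]
where $J$ is the all-ones matrix. As $\operatorname{diag}(r_{P_{i}}-1)$ is positive definite and $J$ is positive semidefinite, $AA^{T}$ is positive definite, so $\operatorname{rank}(A)=k$. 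Since $A$ is $k\times s$, this forces $s\ge k$.

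For the equality case $s=k$, the matrix $A$ is square and invertible, and matching the traces of $(AA^{T})^{2}$ and $(A^{T}A)^{2}$ together with the basic identity $\sum_{p}\binom{r_{p}}{2}=\binom{k}{2}$ constrains the off-diagonal entries of $A^{T}A$ very tightly. A standard case analysis on the multiplicity sequence $(r_{P_{i}})$ then distinguishes the two extremal configurations: either the sequence has a unique maximum, in which case the dual configuration is a near-pencil with one point on $k-1$ lines and $k-1$ two-point lines, which on undualizing yields a Hirzebruch quasi-pencil with $t_{k-1}=1$ and $t_{2}=k-1$; or the sequence is constant equal to $q+1$, in which case the incidence structure is a finite projective plane of order $q$ with $k=q^{2}+q+1$.

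The main obstacle is the structural dichotomy in the extremal case: showing that only these two patterns can arise. The linear-algebraic step cleanly secures $s\ge k$, but distinguishing between the near-pencil and the symmetric $2$-design in the equality case requires careful combinatorial case-analysis of the multiplicity sequence, together with the classical uniqueness result that a symmetric $2$-design with parameters $(q^{2}+q+1,\,q+1,\,1)$ is a finite projective plane of order $q$ (with $q$ necessarily a prime power).
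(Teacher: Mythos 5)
First, a point of comparison: the paper does not prove this statement at all --- it is quoted verbatim as a classical theorem of de Bruijn and Erd\H{o}s with a citation, so there is no in-paper proof to measure your argument against. Evaluated on its own terms, your proof of the inequality $s\ge k$ is correct and is the standard Motzkin-style argument: the dualization is right ($t_k=0$ becomes non-collinearity of the $k$ dual points, and every connecting line of the dual point set corresponds to an intersection point because any two lines of $\mathcal{L}$ meet), the observation that non-collinearity forces $r_{P}\ge 2$ is right, and $AA^{T}=J+\operatorname{diag}(r_{P_i}-1)$ being positive definite does give $\operatorname{rank}(A)=k\le s$.

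The equality case, however, has a genuine gap, and the specific tool you propose does not work. The identity $\operatorname{tr}\bigl((AA^{T})^{2}\bigr)=\operatorname{tr}\bigl((A^{T}A)^{2}\bigr)$ expands to $\sum_i r_{P_i}^{2}+k(k-1)=\sum_{L}|L|^{2}+\sum_i r_{P_i}(r_{P_i}-1)$, which after using $\sum_i r_{P_i}=\sum_L|L|$ (both count incidences) reduces to $\sum_L|L|(|L|-1)=k(k-1)$; this is exactly the identity $\sum_L\binom{|L|}{2}=\binom{k}{2}$ that holds for \emph{every} such configuration, regardless of whether $s=k$. So the trace comparison is a tautology and constrains nothing; the "standard case analysis" you defer to is carrying the entire weight of the dichotomy. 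What is actually needed is the local statement that when $s=k$, every non-incident point--line pair $(P,L)$ satisfies $r_{P}=|L|$; one direction ($r_P\ge|L|$, since the $|L|$ points of $L$ determine distinct connecting lines through $P$) is easy, but the reverse requires a separate counting or bijection argument exploiting $s=k$, and only then does the split into "some line contains $k-1$ points" (near-pencil, i.e.\ Hirzebruch quasi-pencil after undualizing) versus "all $r_P$ and $|L|$ equal a constant $q+1$" (projective plane of order $q$) go through. Finally, your closing claim that $q$ is "necessarily a prime power" as part of a "classical uniqueness result" for symmetric $2$-designs is wrong as stated: a symmetric $2$-$(q^{2}+q+1,q+1,1)$ design is by definition a projective plane of order $q$, and whether non-prime-power orders occur is a famous open problem. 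The restriction $q=p^{n}$ in the theorem comes instead from realizability of the configuration by actual lines in $\mathbb{P}^{2}_{\mathbb{K}}$ over a field, which forces any finite subplane to be Desarguesian; that step is absent from your argument.
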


We now describe explicitly the Levi graph associated to the Hirzebruch quasi-pencil $\mathcal{L}\subseteq \mathbb{P}_{\mathbb{C}}^2$ with $k\geq3$ lines $\ell_1,\dots,\ell_k$ and $k\geq 3$ intersection points $p_1,\dots,p_k$. For $k\geq 3$, let $G_k$ be the associated Levi graph of $\LL$ with $V(G_k)=\{x_1,\dots,x_k,y_1,\dots,y_k\},$ where each vertex $x_i$ corresponds to the intersection point $p_i$ and each vertex $y_j$ corresponds to the line $\ell_j$. After rearranging, we can observe that $G_3$ is a cycle $C:x_1,y_2,x_3,y_3,x_2,y_1,x_1$ of length $6$. Now we construct $G_k$ for $k\geq 4$ inductively from $G_3$ by adding vertices and edges. For $k\geq 4$, we have $t_{k-1}=1$ and $t_2=k-1$. Therefore, there are $k-1$ vertices of degree $2$ in $G_k$ corresponding to the intersection points and the vertex corresponding to the remaining intersection point has degree $k-1$ in $G_k$. We fix the vertex $x_2$ such that $\deg_{G_k}(x_2)=k-1$. Also, there are $k-1$ vertices of degree $2$ in $G_k$ corresponding to the lines and the vertex corresponding to the remaining line has degree $k-1$ in $G_k$. We fix the vertex $y_2$ such that $\deg_{G_k}(y_2)=k-1$. For $k=4$, we obtain the Levi graph $G_4$ by adding two vertices $x_4,y_4$ and edges $\{\{x_2,y_4\},\{y_2,x_4\},\{y_4,x_4\}\}$ to $G_3$ so that $\deg_{G_4}(x_2)=\deg_{G_4}(y_2)=3$. Observe that $G_4$ consists of two cycles 
$$C:x_1,y_2,x_3,y_3,x_2,y_1,x_1 ; \hspace{2mm} C':x_2,y_4,x_4,y_2,x_3,y_3,x_2, \hspace{2mm} \text{where} \hspace{2mm} V(C)\cap V(C')=\{x_2,y_2,x_3,y_3\}.$$ Inductively, we construct $G_k$  by adding vertices $x_k,y_k$ and edges $\{\{x_2,y_k\},\{y_2,x_k\}, \{y_k,x_k\}\}$ to $G_{k-1}.$ Therefore, $G_k$ is a graph with vertex set $V(G_k)=\{x_1,\dots,x_k,y_1,\dots,y_k\}$ and edge set $E(G_k)=\{\{x_2,y_i\}:1\leq i\leq k, i\neq 2\}\cup \{\{x_j,y_2\}:1\leq j\leq k, j\neq 2\}\cup \{\{x_m,y_m\}:1\leq m\leq k, m\neq 2\}.$ (see Figure \ref{fig: quasi-pencil}.)

			\begin{figure}[H]
				\begin{tikzpicture}[scale=1.5]
    \draw [line width=2pt] (-4,3)-- (-4,2);
\draw [line width=2pt] (-4,2)-- (-4,1);
\draw [line width=2pt] (-4,1)-- (-4,0);
\draw [line width=2pt] (-4,3)-- (-5,2);
\draw [line width=2pt] (-5,2)-- (-5,1);
\draw [line width=2pt] (-5,1)-- (-4,0);
\draw [line width=2pt] (-4,3)-- (-3,2);
\draw [line width=2pt] (-3,2)-- (-3,1);
\draw [line width=2pt] (-3,1)-- (-4,0);
\draw [line width=2pt] (-4,3)-- (-2,2);
\draw [line width=2pt] (-2,2)-- (-2,1);
\draw [line width=2pt] (-2,1)-- (-4,0);
\draw [line width=2pt] (-4,3)-- (0,2);
\draw [line width=2pt] (0,2)-- (0,1);
\draw [line width=2pt] (0,1)-- (-4,0);
\draw [dotted ] (-0.2,1.5)-- (-1.8,1.5);
\begin{scriptsize}
\fill  (-4,3) circle (1.5pt);
\draw (-3.8,3.3) node {$x_2$};
\fill  (-4,2) circle (1.5pt);
\draw (-3.8,2) node {$y_3$};
\fill  (-4,1) circle (1.5pt);
\draw (-3.8,1) node {$x_3$};
\fill  (-4,0) circle (1.5pt);
\draw (-3.84,-0.3) node {$y_2$};
\fill  (-5,1) circle (1.5pt);
\draw (-4.7,1) node {$x_1$};
\fill  (-5,2) circle (1.5pt);
\draw (-4.7,2) node {$y_1$};
\fill  (-3,2) circle (1.5pt);
\fill  (-3,1) circle (1.5pt);
\fill  (-2,2) circle (1.5pt);
\draw (-1.8,2) node {$y_5$};
\draw (-1.8,1) node {$x_5$};
\fill  (-2,1) circle (1.5pt);
\fill  (0,2) circle (1.5pt);
\draw (0.3,2) node {$y_k$};
\fill  (0,1) circle (1.5pt);
\draw (0.3,1) node {$x_k$};
\draw (-2.8,1) node {$x_4$};
\draw (-2.8,2) node {$y_4$};
\end{scriptsize}

    \end{tikzpicture}
				\caption{Levi Graph $G_k$ associated to the Hirzebruch quasi-pencil $\mathcal{L}$}\label{fig: quasi-pencil}
			\end{figure}
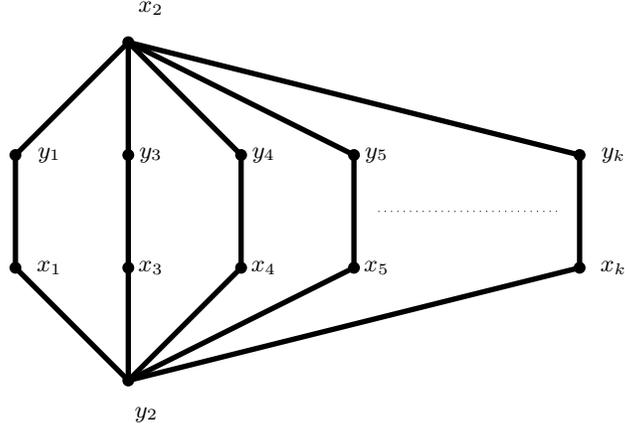

Now, we study the Cohen-Macaulayness of Levi graphs associated to Hirzebruch quasi-pencils.

Let $G=G_k$ for $k\geq 3$. In order to study the Cohen-Macaulayness of $S/J_G$, we first describe the set of cutsets of $G$ and hence obtain the dimension of $S/J_G$.
\begin{theorem}\label{thm: dim-Hirzebruch quasi-pencil}
Let $G=G_k$, $k\geq3$ be the Levi graph associated to the Hirzebruch quasi-pencil $\mathcal{L}\subseteq \mathbb{P}_{\mathbb{C}}^2.$ Let $V(G)=V_1\cup V_2$, where $V_1=\{x_1,\dots,x_k\}$ and $V_2=\{y_1,\dots,y_k\}$. Then $$\mathcal{C}(G)=\{\emptyset, \{x_2,y_2\},A,B,\{q_i:1\leq i\leq k, i\neq 2\}\},$$ where $A\subseteq V_1$ such that $x_2\in A$, $B\subseteq V_2$ such that $y_2\in B$ and either $q_i=x_i$ or $q_i=y_i$ for all $i \in \{1,3,4,\dots,k\}$.
Moreover, 

    $\dim(S/J_G)=
        \begin{cases}
            7 \quad \text{if } k=3 ;\\
            3k-3 \quad \text{if } k\geq 4.
        \end{cases}
        $
\end{theorem}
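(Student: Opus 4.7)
My plan is to first classify all cutsets of $G_k$ by a case analysis based on $T\cap\{x_2,y_2\}$, and then plug each family into the formula in Theorem \ref{thm: dimension formula}. The key structural feature I will exploit is that every vertex other than $x_2,y_2$ has degree two: $N_{G_k}(x_i)=\{y_2,y_i\}$ and $N_{G_k}(y_i)=\{x_2,x_i\}$ for each $i\neq 2$, so $G_k$ is built from $k-1$ four-cycles $x_2,y_i,x_i,y_2$ glued along the two hubs $x_2$ and $y_2$.

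For a nonempty $T\in\mathcal{C}(G_k)$, I split into four cases. When $T\cap\{x_2,y_2\}=\emptyset$, a short component count shows $\omega(T)=1$ whenever some pair $\{x_i,y_i\}$ survives intact, because the bridge $x_2-y_i-x_i-y_2$ then connects everything, while if some pair lies entirely inside $T$ then removing one of its members just exposes the other as an isolated vertex and preserves $\omega(T)$; the only cutsets in this case are those containing exactly one of $x_i,y_i$ for every $i\neq 2$, in which case $\omega(T)=2$ and the cutset condition is easily verified. When $x_2\in T$ and $y_2\notin T$, tracking how a hypothetical $y_{i_0}\in T$ would interact with the $y_2$-star shows that removing $y_{i_0}$ never lowers $\omega(T)$, forcing $T\subseteq V_1$; writing $A=\{x_2\}\cup A'$, the components of $G_k\setminus A$ are the $|A'|$ isolated vertices $y_i$ for $x_i\in A'$ together with one big component, giving $\omega(A)=|A|$, and the cutset condition reduces to $A'\neq\emptyset$. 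The case $y_2\in T, x_2\notin T$ is symmetric and yields the family $B$. Finally, when $\{x_2,y_2\}\subseteq T$, already $\omega(\{x_2,y_2\})=k-1$ with components being the surviving edges $\{x_i,y_i\}$ for $i\neq 2$, and adjoining any further vertex either isolates one endpoint of such an edge or deletes both, leaving $\omega$ at $k-1$; hence only $T=\{x_2,y_2\}$ works.

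The dimension formula then follows by evaluating $(|V(G_k)|-|T|)+\omega(T)$ on each family: $T=\emptyset$ contributes $2k+1$; $T=\{x_2,y_2\}$ contributes $3k-3$; any $A$ or $B$ contributes $2k$ (since $|T|=\omega(T)$); and $T=\{q_i\}$ contributes $k+3$. Comparing, the maximum is $2k+1=7$ when $k=3$ and $3k-3$ when $k\geq 4$, which is the stated formula. The main obstacle I foresee is the clean execution of the first two cases above — in particular, ruling out mixed cutsets that include vertices from both $V_1$ and $V_2$ — which I plan to handle by partitioning $\{1,3,\dots,k\}$ according to which of $x_i,y_i$ lies in $T$ and carefully tracking the isolated vertices produced in each class.
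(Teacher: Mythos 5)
Your proposal is correct and follows essentially the same route as the paper: classify the cutsets of $G_k$ by their intersection with the hub pair $\{x_2,y_2\}$ (using that every other vertex has degree two), then evaluate $(|V(G)|-|T|)+\omega(T)$ on each resulting family, obtaining the same four values $2k+1$, $3k-3$, $2k$, $k+3$. The only (harmless) slip is in your first case: when a pair $\{x_i,y_i\}$ lies entirely in $T$ with $x_2,y_2\notin T$, re-adding $x_i$ attaches it to the surviving hub $y_2$ rather than leaving it isolated, but the needed conclusion $\omega(T\setminus\{x_i\})\geq\omega(T)$ holds either way.
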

\begin{proof}
Let $T\in \mathcal{C}(G)$ be any non-empty cutset of $G$. Assume that $x_2\in T$. We now consider two cases: $y_2 \in T$ or $y_2 \notin T.$ If $y_2\in T$, then we claim that $T$ must be equal to $\{x_2,y_2\}$. If not, then for some $i \neq 2$, $q_i\in T$, where $q_i$ is either $x_i$ or $y_i.$ Then we see that  $\omega(T\setminus \{q_i\}) \geq \omega(T)$, which is a contradiction to the fact that $T\in \mathcal{C}(G)$.

If $y_2\notin T$ and $y_i\in T$ for some $i\neq 2$, then it can be observed that $\omega(T\setminus \{y_i\})\geq \omega(T)$, a contradiction. Therefore, if $x_2\in T$, then either $T=\{x_2,y_2\}$ or $T\subseteq V_1$. Similarly, one can show that if $y_2\in T$, then either $T=\{x_2,y_2\}$ or $T\subseteq V_2$. 

Suppose now $x_2\notin T$ and $y_2\notin T$. We note that if both $x_i$ and $y_i$ belong to $T$ for some $i \neq 2$, then $T$ is not a cutset of $G$. Therefore, for every $i\neq 2$, either $x_i\notin T$ or $y_i\notin T$. If for some $i\neq 2$, $x_i \notin T$ and $y_i\notin T$, then $G\setminus T$ contains the path: $x_2,y_i,x_i,y_2$, and hence $G\setminus T$ is connected. Therefore, $\omega(T)=1$. This implies that for any $v\in T$, $\omega(T\setminus \{v\})\geq \omega(T)$. This contradicts the fact that $T\in \mathcal{C}(G)$. Therefore, for all $i \neq 2$, either $x_i \in T$ or $y_i \in T$. Hence, the assertion for the set of all cutsets $\mathcal{C}(G)$ follows.

Note that $|V(G)|=2k$. If $T=\{x_2,y_2\}$, then $\omega(T)=k-1$. Also, if $T=A$ for some $A\subseteq V_1$ with $x_2\in A$, then $\omega(T)=|T|$ and for $T=\{q_i:1\leq i\leq k, i\neq 2\}$, we have $\omega(T)=2$. Therefore, it follows from Theorem \ref{thm: dimension formula} that $\dim(S/J_G)=\max \{2k+1, 3k-3,2k, k+3\}$. Hence, $\dim(S/J_G)=2k+1=7$ if $k=3$, otherwise $\dim(S/J_G)=3k-3$.
\end{proof}
\begin{theorem}
    Let $G=G_k$, $k \geq 3$ be the Levi graph associated to the Hirzebruch quasi-pencil $\mathcal{L}\subseteq \mathbb{P}_{\mathbb{C}}^2.$ Then $S/J_G$ is never Cohen-Macaulay. Moreover, if $k=3$, then $\cmdef(S/J_G)=1$ and $\cmdef(S/J_G)\geq k-3$ for $k\geq 4$.
\end{theorem}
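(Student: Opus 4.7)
The plan is to apply the general inequality $\depth(S/J_G) \le \dim(S/P)$ valid for every minimal prime $P$ of $J_G$; combined with the explicit cutset classification already given in Theorem~\ref{thm: dim-Hirzebruch quasi-pencil}, this yields both parts of the assertion. Throughout I use the per-cutset identity $\dim(S/P_T(G)) = |V(G)| - |T| + \omega(T)$, which follows from the description of $P_T(G)$ recalled at the start of Section~\ref{Sec: CM} together with the standard fact that the ideal of $2$-minors of a generic $2 \times n$ matrix has height $n-1$: indeed $\h(P_T(G)) = 2|T| + \sum_{j=1}^{\omega(T)}(|V(G_j)|-1) = |V(G)| + |T| - \omega(T)$.

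For $k \ge 4$, I would choose the cutset $T = \{x_1, x_2\}$, which belongs to the family $A \subseteq V_1$ with $x_2 \in A$ classified in Theorem~\ref{thm: dim-Hirzebruch quasi-pencil}. Deleting $T$ from $G$ leaves exactly the edges $\{x_i, y_2\}$ and $\{x_i, y_i\}$ for $i \in \{3, \dots, k\}$, which assemble into a single spider-like component centred at $y_2$, while the vertex $y_1$ becomes isolated. Hence $\omega(T) = 2$ and $\dim(S/P_T(G)) = 2k - 2 + 2 = 2k$, so $\depth(S/J_G) \le 2k$. Combined with $\dim(S/J_G) = 3k-3$ from Theorem~\ref{thm: dim-Hirzebruch quasi-pencil}, this gives $\cmdef(S/J_G) \ge (3k-3) - 2k = k-3$, which is at least $1$ for $k \ge 4$; in particular $S/J_G$ is not Cohen-Macaulay for $k \ge 4$.

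For $k = 3$, the graph $G_3$ is the $6$-cycle $\mathbf{C}_6$, and Theorem~\ref{thm: dim-Hirzebruch quasi-pencil} gives $\dim(S/J_{G_3}) = 7$. Any cutset $T$ consisting of two non-adjacent vertices of $\mathbf{C}_6$ has $\omega(T) = 2$, so the identity above forces $\depth(S/J_{G_3}) \le 6$. For the matching lower bound $\depth(S/J_{G_3}) \ge 6$, I would invoke the known depth computation for binomial edge ideals of cycles of length at least four (available in the literature on cactus graphs and cycles), or verify the value directly for $\mathbf{C}_6$ using Macaulay2, which then gives $\cmdef(S/J_{G_3}) = 1$.

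The main obstacle I expect is the $k = 3$ case: for $k \ge 4$ the argument is a purely combinatorial application of the minimal-prime inequality once the correct cutset is identified, whereas for $k = 3$ one also needs a matching lower bound on $\depth$, which does not follow from the cutset/minimal-prime machinery alone and requires either citing an external result on cycles or performing a direct homological computation.
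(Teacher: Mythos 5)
Your argument is correct, and for the core inequality it takes a genuinely different route from the paper. The paper bounds the depth via the vertex-connectivity result of Banerjee and N\'u\~nez-Betancourt (\cite[Theorem B]{BN17}): after checking that $G\setminus v$ is connected for every vertex $v$, so that $\kappa(G)\geq 2$, it concludes $\depth(S/J_G)\leq |V(G)|+2-\kappa(G)\leq 2k$ and compares with $\dim(S/J_G)=3k-3$. You instead use the elementary fact that $\depth(S/J_G)\leq\dim(S/P)$ for every associated prime $P$, which applies here because $J_G$ is radical with $\Min(J_G)=\{P_T(G):T\in\mathcal{C}(G)\}$, together with the correct height formula $\h(P_T(G))=|V(G)|+|T|-\omega(T)$; the single cutset $T=\{x_1,x_2\}$ (with $\omega(T)=2$, as you verify) then yields the same bound $\depth(S/J_G)\leq 2k$ without invoking any external depth theorem. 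This is arguably more self-contained, since it reuses only the primary decomposition already recalled in Section~\ref{Sec: CM} and the cutset classification of Theorem~\ref{thm: dim-Hirzebruch quasi-pencil}; in fact your method can do slightly better, since the cutset $T=\{q_i:1\leq i\leq k,\ i\neq 2\}$ from that classification gives $\dim(S/P_T(G))=k+3$ and hence $\cmdef(S/J_G)\geq 2k-6$, which improves on $k-3$ for $k>3$. For $k=3$ both arguments ultimately defer to known facts about $J_{\mathbf{C}_6}$: the paper cites \cite[Theorem 4.5]{ZafarBMS} for non-Cohen-Macaulayness of cycles, while you obtain $\depth\leq 6$ from a two-vertex cutset and then need the matching lower bound $\depth(S/J_{\mathbf{C}_6})=6$ from the literature on cycles (or a direct computation) to pin down $\cmdef=1$ --- an external input comparable to the paper's citation, so this is not a gap, merely a dependence you correctly flag.
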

\begin{proof}
    For $k=3$, the assertion follows from \cite[Theorem 4.5]{ZafarBMS}. So assume that $k\geq 4$. It follows from \cite[Theorem B]{BN17} that $\depth(S/J_G)\leq |V(G)|+2-\kappa(G)$, where $\kappa(G) = \min\{|T| ~:~ T \subset V(G) \text{ and } G \setminus T \text{ is disconnected}\}$. Note that for any vertex $v\in V(G)$, the induced subgraph $G\setminus v$ is connected. Thus, we have $\kappa(G)\geq 2$. Therefore, $\depth(S/J_G)\leq |V(G)|=2k.$ By Theorem \ref{thm: dim-Hirzebruch quasi-pencil}, $\dim(S/J_G)=3k-3 >\depth(S/J_G)$ and $\dim(S/J_G)-\depth(S/J_G)\geq (3k-3)-2k=k-3$. Hence, $S/J_G$ is not Cohen-Macaulay with $\cmdef(S/J_G)\geq k-3$.
    
\end{proof}
 \subsection{Binomial edge ideals associated with curve arrangements}\label{sec: not-Cohen-Macaulay}
  In \cite{HH05}, Herzog and Hibi provided a combinatorial characterization of Cohen-Macaulay edge ideals associated with bipartite graphs. Using this characterization, Pokora and R\"{o}mer \cite{PR22} show that the edge ideals of Levi graphs associated to certain line arrangements are never Cohen-Macaulay. Motivated by their result, we study the Cohen-Macaulayness of binomial edge ideals of Levi graphs of various curve arrangements in the complex projective plane. 
  
  In our study, we use the characterization of Cohen-Macaulay binomial edge ideals of bipartite graphs given by Bolognini, Macchia and Strazzanti in \cite{DAV}. We need the following combinatorial property for Cohen-Macaulay bipartite graphs which will be used in the subsequent theorems. 

 \begin{lemma}\label{lem:deg-of-vertex}
 Let $G$ be a bipartite graph such that $S/J_G$ is Cohen-Macaulay. Then there exists a leaf in $G$ i.e., there exists $v\in V(G)$ such that $\deg_G(v)=1$.
 \end{lemma}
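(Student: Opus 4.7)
The natural plan is to invoke the combinatorial classification of Cohen-Macaulay bipartite graphs proved by Bolognini, Macchia, and Strazzanti in \cite{DAV}, which is flagged in the excerpt as the main tool of this section. Their theorem gives an explicit list of all connected bipartite graphs $G$ for which $S/J_G$ is Cohen-Macaulay, describing each such $G$ as a specific ``fan-type'' graph built by iteratively attaching explicit elementary bipartite building blocks along pendant edges. Once this description is in hand, the lemma is reduced to the direct combinatorial observation that every graph on that list carries at least one vertex of degree one.

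First I would reduce to the connected case. Writing $G = G_1 \sqcup \cdots \sqcup G_\ell$ as a disjoint union of its connected components, the generators of each $J_{G_i}$ involve disjoint sets of variables of $S$, hence $S/J_G$ decomposes as a tensor product over $\KB$ of the rings $S_i/J_{G_i}$. A tensor product of standard graded $\KB$-algebras is Cohen-Macaulay only if every factor is Cohen-Macaulay, so each component $G_i$ is itself a Cohen-Macaulay bipartite graph. A leaf in any component is already a leaf of $G$, so we may assume $G$ is connected.

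For connected $G$, the BMS classification exhibits $G$ as an iterated gluing of explicit bipartite building blocks, each of which possesses several vertices of degree one, while each gluing step identifies only one leaf of one block with one leaf of another. A short induction on the number of blocks then shows that at least one leaf of the final block is never identified with anything and therefore survives as a pendant vertex of $G$. The only real obstacle is the combinatorial bookkeeping needed to verify, from the explicit form of the blocks and the gluing operation in \cite{DAV}, that at least one leaf is genuinely left intact after every step; this is a routine inspection once the relevant definitions from \cite{DAV} are recalled, so the lemma is in essence a direct corollary of the BMS structure theorem rather than an independent computation.
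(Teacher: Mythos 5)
Your proof is correct in substance but reaches the conclusion by a genuinely heavier route than the paper. The paper only uses the implication Cohen--Macaulay $\Rightarrow$ unmixed and then quotes \cite[Proposition 2.3]{DAV}, which already asserts that an unmixed connected bipartite graph has exactly two leaves; the lemma is then immediate. You instead invoke the full Bolognini--Macchia--Strazzanti classification of Cohen--Macaulay bipartite graphs as chains of fans $F_m$ glued by the operations $*$ and $\circ$, and track leaves through the construction. That does work: each fan $F_m$ has exactly two leaves (not ``several'', but two suffices), each gluing step consumes one leaf from each operand (note that $\circ$ actually deletes the two chosen leaves and identifies their neighbours, rather than identifying the leaves, which does not affect your count), and the chain structure leaves one untouched leaf on each extremal block, so the assembled graph again has exactly two leaves. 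Your reduction to the connected case via the tensor-product decomposition of $S/J_G$ and additivity of depth and dimension is a point the paper silently skips and is worth making explicit, since the cited proposition is stated for connected graphs. The trade-off is clear: the paper's argument rests on a preparatory, essentially elementary proposition about unmixedness, while yours rests on the deepest theorem of \cite{DAV} plus some structural bookkeeping; in exchange, your route recovers the sharper ``exactly two leaves'' statement directly from the classification. Both proofs tacitly assume $G$ has no isolated vertices (a single vertex is bipartite with $S/J_G$ Cohen--Macaulay and no leaf), which is harmless for the Levi graphs considered here.
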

 \begin{proof}
Let $G$ be a bipartite graph such that $S/J_G$ is Cohen-Macaulay. Hence, $S/J_G$ is unmixed.
 By \cite[Proposition 2.3]{DAV}, $G$ has exactly two leaves. As any leaf has degree $1$, the assertion follows.
 \end{proof}
 
 \begin{theorem}\label{thm:not CM d arr}
     Let $\mathcal{C} = \{ C_1,\dots , C_k\}$ be a $d$-arrangement of $k\geq 3$ curves in $\mathbb{P}^2_{\mathbb{C}}$ with $s$ intersection points.
Let  $S=\mathbb{K}[x_v,y_v:v\in V(G)]$ and $J_G$ be the associated binomial edge ideal determined by the Levi graph $G$ of $\mathcal{C}$. Then $S/J_G$ is never Cohen-Macaulay.
 \end{theorem}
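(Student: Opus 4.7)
The plan is to reach a contradiction from the assumption that $S/J_G$ is Cohen-Macaulay, using Lemma \ref{lem:deg-of-vertex} together with the stronger fact (already invoked in its proof via \cite[Proposition 2.3]{DAV}) that a Cohen-Macaulay bipartite graph has \emph{exactly} two leaves. My strategy is to classify when the Levi graph $G$ admits a leaf and show that in the only such case the number of leaves is strictly greater than two.

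First I would analyze the degrees of the two vertex classes of $G$. For each intersection-point vertex $x_i$, one has $\deg_G(x_i) = m_{p_i} \geq 2$ by definition of an ordinary singular point, so no $x_i$ is ever a leaf. For a curve vertex $y_j$, the degree equals $|\sing(\mathcal{C}) \cap C_j|$. Assuming $\deg_G(y_j) = 1$ and letting $p$ be the unique singular point lying on $C_j$, the combinatorial identity \eqref{comb count} gives $m_p - 1 = d^2(k-1)$. Combined with the trivial bound $m_p \leq k$, this forces $d^2(k-1) \leq k-1$, i.e., $d^2 \leq 1$, so $d = 1$ and $m_p = k$. In other words, a leaf in $G$ can occur only when $\mathcal{C}$ is a pencil of $k$ concurrent lines.

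Now there are two cases. If $\mathcal{C}$ is not a pencil, then $G$ has no leaves, and Lemma \ref{lem:deg-of-vertex} immediately yields that $S/J_G$ is not Cohen-Macaulay. If $\mathcal{C}$ is a pencil, then $s = 1$ and $G$ is the star $K_{1,k}$, which has $k \geq 3$ leaves. Applying the stronger form of \cite[Proposition 2.3]{DAV}, namely that a Cohen-Macaulay bipartite graph has exactly two leaves, rules out this case as well.

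The main obstacle is the pencil case, which is precisely where Lemma \ref{lem:deg-of-vertex} as literally stated (existence of \emph{a} leaf) is not enough. Fortunately, the cited result from \cite{DAV} gives the sharper count, so the argument goes through for all $d$-arrangements uniformly. A minor bookkeeping point to check is that a $d$-arrangement with $k \geq 3$ curves of degree $d \geq 1$ automatically has $s \geq 1$ (any two curves meet in at least one singular point by B\'ezout), ensuring the Levi graph is well defined and nonempty throughout the argument.
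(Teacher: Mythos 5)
Your proof is correct and follows essentially the same route as the paper: the combinatorial count \eqref{comb count} together with the bound $m_p\le k$ shows that a curve vertex can be a leaf only when $\mathcal{C}$ is a pencil of lines, and otherwise Lemma \ref{lem:deg-of-vertex} applies directly. The only deviation is the pencil case, where the paper excludes Cohen-Macaulayness via a cut-vertex/dimension argument (Theorem \ref{thm: dimension formula} and \cite[Corollary 3.4]{HHHKR}) rather than your appeal to the exact two-leaf count of \cite[Proposition 2.3]{DAV}; both work, since $K_{1,k}$ with $k\ge 3$ is connected and has $k$ leaves.
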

\begin{proof}
    Let $G$ be the Levi graph associated with the $d$-arrangement $\mathcal{C}$ with $V(G)=V_1\cup V_2$, where $V_1=\{x_1,\dots,x_s\}$ corresponds to the intersection points of $\mathcal{C}$ and $V_2=\{y_1,\dots,y_k\}$ corresponds
to the curves in $\mathcal{C}$. Note that $\deg_G(x_i)\geq 2$ for all $i.$ First, we assume that $\mathcal{C}$ is a pencil of $k$ lines i.e., $t_k=1$ and $d=1$. In this case, $G$ is a star graph $K_{1,k}$ with the root $x_1$ corresponding to the intersection point. Since $G\setminus x_1$ is a graph consisting of $k$ isolated vertices $\{y_1,\dots,y_k\}$, the vertex $x_1$ is a cut vertex and by Theorem \ref{thm: dimension formula}, $\dim(S/J_G)\geq 2k>|V(G)|+1=k+2$ as $k\geq 3$. Hence, by \cite[Corollary 3.4]{HHHKR}, $S/J_G$ is not Cohen-Macaulay. 

To study the other cases, we use the following combinatorial count \eqref{comb count}:\\
For every $C_i\in \mathcal{C}$, 
\begin{align}\label{eqn-1}
    d^2(k-1)=\sum_{p\in \sing(\mathcal{C})\cap C_i}(m_p-1),
\end{align}
    where $m_{p}$ denotes the multiplicity i.e., the number of curves from $\mathcal{C}$ passing through $p\in \sing(\mathcal{C})$.
    
We now suppose $\mathcal{C}$ is a line arrangement with $t_k=0$. If $C_i$ has only one intersection point $p$, then $m_{p}=k$ which contradicts the fact that $t_k=0$. Therefore, for every $C_i\in \mathcal{C}$, there are at least two intersection points from $\sing(\mathcal{C})$ and hence $\deg_G(y_i)\geq 2$ for all $i$.

 Lastly, we assume that $\mathcal{C}$ is a $d$-arrangement with $d\geq 2$. Since $m_p\leq k$ for all $p \in \sing(\mathcal{C})$, it follows from \eqref{eqn-1} that there are at least $d^2$ intersection points in $C_i$ i.e., $\deg_G(y_i)\geq d^2>2$ for all $i.$ So, in both the cases, there is no vertex $v$ in $G$ such that $\deg_G(v)=1$. Hence, by Lemma \ref{lem:deg-of-vertex}, $S/J_G$ is not Cohen-Macaulay.
\end{proof}

\begin{theorem}\label{thm:not CM conic-line}
    Let $\mathcal{CL}=\{\ell_1,\dots,\ell_n,C_1,\dots,C_k\}\subseteq \mathbb{P}^2_{\mathbb{C}}$ be a  conic-line arrangement with $s$ intersection points. Let $S_H=\mathbb{K}[x_v,y_v:v\in V(H)]$ and  $J_H$ be the associated binomial edge ideal determined by the Levi graph $H$ of $\mathcal{CL}$. Then $S_H/J_H$ is never Cohen-Macaulay.
\end{theorem}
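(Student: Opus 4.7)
The plan is to mimic the strategy of Theorem~\ref{thm:not CM d arr}. Since $H$ is bipartite, Lemma~\ref{lem:deg-of-vertex} gives that Cohen-Macaulayness of $S_H/J_H$ forces $H$ to contain a leaf. I would therefore aim to rule out leaves in $H$ (after reducing away a couple of degenerate configurations), and the conclusion follows by contrapositive.

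First, I would reduce the problem to the genuinely mixed case $n\geq 1$ and $k\geq 1$. If $k=0$, then $\mathcal{CL}$ is a line arrangement, i.e.\ a $1$-arrangement, and Theorem~\ref{thm:not CM d arr} applies directly; similarly if $n=0$, the arrangement is a $2$-arrangement and the same theorem covers it. So I may assume $n,k\geq 1$. Intersection-point vertices $x_i\in W_1$ always have $\deg_H(x_i)=m_{p_i}\geq 2$ since $p_i$ is a singularity, so the only way a leaf could appear in $H$ is through some curve vertex in $W_2$ corresponding to a line or a conic on which only one singular point lies.

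To rule this out I would use Bezout plus the ordinary-singularity (transversality) hypothesis to get the key counts: for each line $\ell_j$,
$$\sum_{p\in \sing(\mathcal{CL})\cap \ell_j}(m_p-1)=(n-1)+2k,$$
and for each conic $C_j$,
$$\sum_{p\in \sing(\mathcal{CL})\cap C_j}(m_p-1)=4(k-1)+2n.$$
If $\deg_H(y_j)=1$ for a line $\ell_j$, the single singular point $p$ on $\ell_j$ would satisfy $m_p=n+2k$, contradicting the trivial bound $m_p\leq n+k$ since $k\geq 1$. If $\deg_H(y_{n+j})=1$ for a conic $C_j$, the unique singular point would have $m_p=4k+2n-3\leq n+k$, forcing $3k+n\leq 3$; with $n,k\geq 1$ this is impossible. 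Thus every vertex of $H$ has degree at least $2$, $H$ has no leaf, and Lemma~\ref{lem:deg-of-vertex} yields that $S_H/J_H$ is not Cohen-Macaulay.

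I do not expect real technical difficulty here: the argument is purely a degree count using Bezout and an appeal to Lemma~\ref{lem:deg-of-vertex} and Theorem~\ref{thm:not CM d arr}. The only delicate point is the bookkeeping of small degenerate configurations (e.g.\ a single conic, a single line, or the pencil-like cases arising implicitly when some $m_p$ saturates the bound); these all get absorbed either into the direct Bezout estimate or into the reduction to Theorem~\ref{thm:not CM d arr}.
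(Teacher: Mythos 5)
Your argument is essentially the paper's own proof: the same B\'ezout counts $(n-1)+2k$ for lines and $2n+4(k-1)$ for conics, the same bound $m_p\le n+k$ to rule out a curve meeting $\sing(\mathcal{CL})$ in a single point, and the same appeal to Lemma~\ref{lem:deg-of-vertex} to exclude leaves. The only difference is your explicit reduction of the degenerate cases $n=0$ or $k=0$ to Theorem~\ref{thm:not CM d arr}, which the paper avoids by implicitly assuming $n,k\ge 1$.
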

\begin{proof}
    Let $H$ be the Levi graph associated with the conic-line arrangement $\mathcal{CL}$ with $V(H)=W_1\cup W_2$, where $W_1=\{x_1,\dots,x_s\}$ corresponds to the intersection points of $\mathcal{CL}$ and $W_2=\{y_1,\dots,y_n,y_{n+1},\dots,y_{n+k}\}$,  where $y_i$ corresponds to the line $\ell_i$, when $1 \leq i \leq n$ and $y_{n+j}$ corresponds to the conic $C_j$, when $1\leq j \leq k$ in $\mathcal{CL}$. Since each intersection point has multiplicity at least $2$, $\deg_H(x_i)\geq 2$. It can be observed that for every $\ell_i\in \mathcal{CL}$, 
\begin{align}\label{eqn-2}
    (n-1)+2k=\sum_{p\in \sing(\mathcal{CL})\cap \ell_i}(m_p-1) \text{ and }
\end{align}
for every $C_j\in \mathcal{CL}$,
\begin{align}\label{eqn-3}
    2n+4(k-1)=\sum_{p\in \sing(\mathcal{CL})\cap C_j}(m_p-1).
\end{align}
For every intersection point $p$, we have $m_p\leq n+k$. If $\ell_i$ has only one intersection point, then $m_p-1\leq n+k-1<(n-1)+2k$. Also if $C_j$ has only one intersection point, then $m_p-1\leq n+k-1<2n+4(k-1)$ as $n\geq 1$ and $k\geq 1$. This implies that there are at least two intersection points for every $\ell_i$ and $C_j$. Therefore, $\deg_H(y_i)\geq 2$ for all $1\leq i\leq n$ and $\deg_H(y_{n+j})\geq 2$ for all $1\leq j\leq k$. So, there is no vertex $v$ in $H$ such that $\deg_H(v)=1$. Hence, by Lemma \ref{lem:deg-of-vertex}, $S_H/J_H$ is not Cohen-Macaulay.
\end{proof}

\section{Certain properties of Levi graphs associated with curve arrangements}\label{Sec: Prop}
In this section, we study the existence of certain induced cycles in the Levi graphs of some plane curve arrangements and use this information in Section \ref{Regu} to obtain bounds for the regularity of powers of the corresponding binomial edge ideals.

Let $\mathcal{C} = \{C_1,\dots , C_k\} \subset \mathbb{P}^2_{\mathbb{C}}$ be a $d$-arrangement and $G$ be the associated Levi graph. In what follows, we use the following notations for $q_1, \dots, q_i \in \{1, \dots ,k\}$ with $q_1 < q_2 < \dots < q_i$ and for distinct $i,j,r \in \{1,2, \ldots,k\}$:

\vspace{2mm}
\begin{enumerate}
\item $C_{q_1q_2\cdots q_i}$: Set of $i$-fold points in $\sing{(\mathcal{C})}$ where exactly $C_{q_1}, C_{q_2},\dots,C_{q_i}$ from $\mathcal{C}$ meet.\\

\item $\sing_{i}{(\mathcal{C})}$: Set of all $i$-fold points in  $\sing{(\mathcal{C})}$ i.e.,
$$\sing_{i}{(\mathcal{C})}= \bigcup_{\begin{smallmatrix} q_1,\dots ,q_i \in \{1, \dots, k\}  & \\  q_1 <  \cdots < q_i \end{smallmatrix}} C_{ q_1 q_2 \cdots q_i}, $$
and \[ t_i(\mathcal{C}) = |\sing_{i}{(\mathcal{C})}|=   \sum_{\begin{smallmatrix} q_1, \dots, q_i \in \{1, \dots, k \} & \\ q_1 < \cdots < q_i \end{smallmatrix}} |C_{q_1q_2 \cdots q_i}|. \]

 \item 
 $\sing^{r}_{i}{(\mathcal{C})}$: Set of all points in $\sing_{i}{(\mathcal{C})} \cap C_r$ i.e.,
\[ \sing^{r}_{i}{(\mathcal{C})}=\bigcup_{\begin{smallmatrix} q_1,\dots ,q_{i-1} \in \{1, \dots, k\} \setminus\{r\} & \\  q_1  < \cdots < q_{i-1} \end{smallmatrix}} C_{rq_1 q_2 \cdots q_{i-1}} . \]
\item
 $\sing^{jr}_{i}{(\mathcal{C})}$: Set of all points in $\sing_{i}{(\mathcal{C})} \cap C_j \cap C_r$ i.e.,
 \[ \sing^{jr}_{i}{(\mathcal{C})}=\bigcup_{\begin{smallmatrix} q_1,\dots ,q_{i-2} \in \{1, \dots, k\} \setminus\{j,r\} & \\  q_1 < \cdots < q_{i-2} \end{smallmatrix}} C_{jrq_1 q_2 \cdots q_{i-2}} .\]
\item  
 $\sing^{jr}_{i}{(\mathcal{C})}\setminus C_i$: Set of all points in $\sing^{jr}_{i}{(\mathcal{C})}$ that do not belong to $\sing(\mathcal{C}) \cap C_i.$\\
 \item 
  $\sing^{ijr}_{i}{(\mathcal{C})}$: Set of all points in $\sing_{i}{(\mathcal{C})} \cap C_i \cap C_j \cap C_r$ i.e.,
\[ \sing^{ijr}_{i}{(\mathcal{C})}=\bigcup_{\begin{smallmatrix} q_1,\dots ,q_{i-3} \in \{1, \dots, k\} \setminus\{i,j,r\} & \\  q_1 < \cdots < q_{i-3} \end{smallmatrix}} C_{ijrq_1 q_2 \cdots q_{i-3}} .\]  
  
\end{enumerate}

We note that in this case, any induced $\mathbf{C}_{6}$ in $G$ must contain three vertices corresponding to three intersection points and three vertices corresponding to three curves from $\mathcal{C}.$ In order to emphasize the choice of intersection points and curves appearing in the induced cycle, we write its vertices in terms of the points and curves directly (as in Figure \ref{fig: Cycle}, for example) instead of using the notations of $V(G)$ as in Definition \ref{df: d-arr}.


\begin{theorem}\label{thm: C_6-containment for d-arrangement}
     Let $\mathcal{C} = \{ C_1,\dots , C_k\}$ be a $d$-arrangement of $k\geq 3$ curves in $\mathbb{P}^2_{\mathbb{C}}$ with $t_k = 0$. Then the associated Levi graph $G$ of $\mathcal{C}$ has an induced  $\mathbf{C}_{6}$.
 \end{theorem}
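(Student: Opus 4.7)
The plan is to reduce the existence of an induced $\mathbf{C}_6$ in the Levi graph $G$ to the existence of a single triple of curves $\{C_a,C_b,C_c\}$ whose common intersection $C_a\cap C_b\cap C_c$ has fewer than $d^2$ points, and then to exhibit such a triple using B\'ezout's theorem together with the hypothesis $t_k=0$.

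First I would unpack what an induced $\mathbf{C}_6$ means in the bipartite graph $G$ with parts $V_1$ (intersection points) and $V_2$ (curves): it is precisely the data of three curves $C_a,C_b,C_c\in\mathcal{C}$ together with three points $p_{ab}\in C_a\cap C_b$, $p_{bc}\in C_b\cap C_c$, $p_{ac}\in C_a\cap C_c$ satisfying the three non-incidences $p_{ab}\notin C_c$, $p_{bc}\notin C_a$, $p_{ac}\notin C_b$, which are exactly what is needed to rule out the three potential point-curve chords of the $6$-cycle (and which automatically force the three points to be distinct). Because $\mathcal{C}$ has only ordinary singularities, each pair $C_i,C_j$ meets transversally at every common point, so B\'ezout's theorem gives $|C_i\cap C_j|=d^2$ for every $i\neq j$. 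Consequently, if some triple $\{a,b,c\}$ satisfies $|C_a\cap C_b\cap C_c|<d^2$, then each of $(C_a\cap C_b)\setminus C_c$, $(C_b\cap C_c)\setminus C_a$, $(C_a\cap C_c)\setminus C_b$ is nonempty, furnishing the three points required above.

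To produce such a triple I would argue by contradiction using $t_k=0$. Suppose $|C_a\cap C_b\cap C_c|=d^2$ for every triple. Since $C_a\cap C_b\cap C_c\subseteq C_a\cap C_b$ and both sides have cardinality $d^2$, equality forces $C_a\cap C_b\subseteq C_c$ for every third curve $C_c$. Fixing the pair $\{1,2\}$ and letting $c$ range over $\{3,\ldots,k\}$ then shows that every point of $C_1\cap C_2$ lies on all $k$ curves and is therefore a $k$-fold point; as $|C_1\cap C_2|=d^2\geq 1$, this yields $t_k\geq 1$, contradicting the hypothesis. The case $k=3$ is absorbed into the same argument, since $t_3=0$ already implies $C_1\cap C_2\cap C_3=\emptyset$.

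The main step to get right is the initial reduction: the induced $\mathbf{C}_6$ condition a priori demands three separate non-incidences, and it is not obvious that one can secure all of them simultaneously. The observation that they collapse, uniformly in $d$, to the single cardinality inequality $|C_a\cap C_b\cap C_c|<d^2$ is what makes the remainder of the proof a short B\'ezout count combined with the hypothesis $t_k=0$.
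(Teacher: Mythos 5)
Your proof is correct, and it takes a genuinely different and considerably shorter route than the paper's. The paper organizes its argument around the sets $C_{ij}$ of honest $2$-fold points, splitting into cases according to which of $C_{ij}, C_{jr}, C_{ri}$ are empty, and in the hardest case (all three empty) it runs an iterative process over auxiliary curves $C_{r_1}, C_{r_2}, \dots$ that successively empties out the lists $\sing_m^{ijr}(\mathcal{C})$ until a counting contradiction appears. Your key observation --- that the three non-incidence conditions defining an induced $\mathbf{C}_6$ on $\{C_a,C_b,C_c\}$ are each equivalent to the single inequality $|C_a\cap C_b\cap C_c|<d^2$, because all three pairwise intersections have exactly $d^2$ points and all contain the triple intersection --- collapses the entire case analysis: the theorem reduces to finding one triple with non-full triple intersection, and if no such triple exists then $C_1\cap C_2\subseteq C_c$ for every $c$, so each of the $d^2\geq 1$ points of $C_1\cap C_2$ is a $k$-fold point, contradicting $t_k=0$. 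The paper implicitly uses the same counting fact $|\sing(\mathcal{C})\cap C_i\cap C_j|=d^2$ in its Subcase I, but never isolates the clean dichotomy you identify; your version also makes transparent why $t_k=0$ is exactly the right hypothesis. The one point worth stating explicitly in a final write-up is that the three non-incidences also force the three points (and hence all six vertices of the cycle) to be distinct, which you do note; with that, the argument is complete.
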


 \begin{proof}
Let us consider three curves $C_i, C_j, C_r$ in $\mathcal{C}$ for distinct $i,j,r \in \{1,2, \ldots,k\}$.
 We divide the proof into two cases.\\

\noindent \textbf{Case I} : At least one of $C_{ij}, C_{jr}, C_{ri}$ is non-empty.
 
 Without loss of generality, assume that $C_{ij} \neq \emptyset$ and let $p_{ij} \in C_{ij}$. If both $C_{jr}$ and $C_{ri}$ are also non-empty, then we have $2$-fold points $p_{jr} \in C_{jr}$ and $p_{ri} \in C_{ri}$. Since  $p_{ij} \in C_{ij}$ is a $2$-fold point of $\sing(\mathcal{C}) \cap C_i \cap C_j$, it cannot be on $C_r$. Similarly, $p_{jr}$ cannot be on $C_i$ and $p_{ri}$ cannot be on $C_j$. Thus, $p_{ij}, p_{jr}, p_{ri}$ along with curves $C_i, C_j, C_r$ correspond to an induced $\mathbf{C}_6$ in $G$ as described in Figure \ref{fig: Cycle}.

 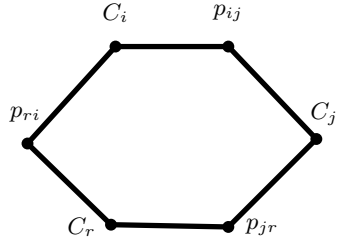
\begin{figure}[H]
   \centering
\begin{tikzpicture}[scale=1.5]

\draw [line width=2pt] (-2,2)-- (-1,2);
\draw [line width=2pt] (-1,2)-- (-0.22,1.18);
\draw [line width=2pt] (-2,2)-- (-2.78,1.14);
\draw [line width=2pt] (-2.78,1.14)-- (-2.04,0.42);
\draw [line width=2pt] (-2.04,0.42)-- (-1,0.4);
\draw [line width=2pt] (-1,0.4)-- (-0.22,1.18);
\begin{scriptsize}
\fill  (-2,2) circle (1.5pt);
\draw (-2,2.3) node {$C_i$};
\fill  (-1,2) circle (1.5pt);
\draw (-1,2.3) node {$p_{ij}$};
\fill  (-2.78,1.14) circle (1.5pt);
\draw (-2.8,1.4) node {$p_{ri}$};
\fill  (-2.04,0.42) circle (1.5pt);
\draw (-2.3,0.4) node {$C_r$};
\fill  (-0.22,1.18) circle (1.5pt);
\draw (-0.15,1.4) node {$C_j$};
\fill  (-1,0.4) circle (1.5pt);
\draw (-.7,0.4) node {$p_{jr}$};
\end{scriptsize}
\end{tikzpicture}\caption{Cycle of length $6$}\label{fig: Cycle}
\end{figure}

 We now assume that at least one of $C_{jr}$ or $ C_{ri}$ is empty. In this case, we prove the assertion by contradiction.
 
 Suppose $G$ has no induced $\mathbf{C}_{6}$ corresponding to the three curves $C_i, C_j$ and $C_r$.

  \vspace{2mm}
 \noindent \textbf{Subcase I}:
  Assume $C_{ij}, C_{jr} \neq \emptyset$ and $C_{ri} = \emptyset$. Let $p_{ij} \in C_{ij}$ and $p_{jr} \in C_{jr}$.

 Suppose there is a point $\bar{p}_{ri}$ in $\sing(\mathcal{C}) \cap C_r \cap C_i$ that does not belong to $\sing(\mathcal{C})\cap C_j$. Then, the $2$-fold points $p_{ij} \in C_{ij}, p_{jr} \in C_{jr}$ and $\bar{p}_{ri} \in \sing(\mathcal{C}) \cap C_r \cap C_i$ along with the three curves $C_i, C_j, C_r$ correspond to an induced $\mathbf{C}_6$ in $G$. 
 Since by hypothesis, $G$ does not have an induced $\mathbf{C}_6$ corresponding to $C_i, C_j$ and $C_r$, all points of $\sing(\mathcal{C}) \cap C_r \cap C_i$ must belong to $\sing(\mathcal{C}) \cap C_j$. In other words, all the points of $\sing(\mathcal{C}) \cap C_r \cap C_i$ come from the list: 
 \begin{equation}\label{list-1}
 \sing_3^{ijr}(\mathcal{C}), \sing_4^{ijr}(\mathcal{C}), \dots, \sing_{k-1}^{ijr}(\mathcal{C}).
 \end{equation}

     
Since $C_r$ and $C_i$ meet in $d^2$ many points,   
\begin{equation}\label{card-1}
|\sing(\mathcal{C}) \cap C_r \cap C_i| = |\sing_3^{ijr}(\mathcal{C})| + |\sing_4^{ijr}(\mathcal{C})| +\cdots+ |\sing_{k-1}^{ijr}(\mathcal{C})| = d^2.   
\end{equation}
Note that all points in list \eqref{list-1} also belong to $\sing(\mathcal{C}) \cap C_i \cap C_j$. Since $C_{ij} \neq \emptyset$, $p_{ij} \in C_{ij}$ does not come from \eqref{list-1}. Thus,
\begin{align}\label{card-2}
|\sing_3^{ijr}(\mathcal{C})| + |\sing_4^{ijr}(\mathcal{C})| +\cdots+ |\sing_{k-1}^{ijr}(\mathcal{C})| < |\sing(\mathcal{C}) \cap C_i \cap C_j| = d^2,
\end{align}
a contradiction.\\
The proof is similar in the case when $C_{jr}=\emptyset$ and $C_{ij},C_{ri} \neq \emptyset$. \\

 \noindent \textbf{Subcase II}:
Assume $C_{ij} \neq \emptyset$ and both $C_{jr}$ and $C_{ri}$ are empty. Let $p_{ij} \in C_{ij}$. 

Since $C_{jr} =\emptyset $, all points of $\sing(\mathcal{C}) \cap C_j \cap C_r$ have multiplicity at least three. We divide these intersection points into two sets, one for those that belong to $\sing(\mathcal{C}) \cap C_i$ and the other one for those that do not belong to $\sing(\mathcal{C}) \cap C_i$. The points of $\sing(\mathcal{C})\cap C_j \cap C_r$ that do not belong to $\sing(\mathcal{C}) \cap C_i$ must come from the following list:
\begin{equation}\label{list 1 setminus}
\sing_3^{jr}(\mathcal{C}) \setminus C_i, \sing_4^{jr}(\mathcal{C}) \setminus C_i,  \dots, \sing_{k-1}^{jr}(\mathcal{C}) \setminus C_i.
\end{equation}
If all the sets of list \eqref{list 1 setminus} are empty i.e., all the points of $\sing(\mathcal{C})\cap C_j \cap C_r$ belong to $\sing(\mathcal{C}) \cap C_i,$  all points of $\sing(\mathcal{C})\cap C_j \cap C_r$ come from list \eqref{list-1}. So, we can proceed as in the proof of \textbf{Subcase I} and arrive  at a contradiction.

Suppose there exist sets in list \eqref{list 1 setminus} that are non-empty, say, $\sing_a^{jr}(\mathcal{C}) \setminus C_i \neq \emptyset$ for  some $a \in \{3, \dots , k-1\}$.  Let $p_{jr}^{\star} \in \sing_a^{jr}(\mathcal{C}) \setminus C_i.$
 If there is a point $p_{ri}^{\star} \in (\sing(\mathcal{C})\cap C_r \cap C_i) \setminus (\sing(\mathcal{C}) \cap C_j)$, then $p_{ij}\in C_{ij}$, $p_{jr}^{\star}$ and $p_{ri}^{\star}$ along with the three curves $C_i, C_j, C_r$ correspond to an induced $\mathbf{C}_6$ in $G$, contrary to the hypothesis.

 So, all points of $\sing(\mathcal{C})\cap C_r \cap C_i$ also belong to  $\sing(\mathcal{C}) \cap C_j$ i.e., all points of $\sing(\mathcal{C})\cap C_r \cap C_i$ come from list \eqref{list-1}.
 
 But as in \textbf{Subcase I}, all points of list \eqref{list-1} also belong to $\sing(\mathcal{C}) \cap C_i \cap C_j$ and $p_{ij} \in C_{ij}$ does not come from list \eqref{list-1}. Hence, $|\sing(\mathcal{C})\cap C_r \cap C_i| < d^2$, a contradiction.



\vspace{3mm}

\noindent \textbf{Case II} : All of $C_{ij}, C_{jr}, C_{ir}$ are empty.

Let us consider the following lists :

\begin{align}
    \sing_3^{ij}(\mathcal{C}) \setminus C_r, \sing_4^{ij}(\mathcal{C}) \setminus C_r,\dots , \sing_{k-1}^{ij}(\mathcal{C}) \setminus C_r,\label{align 1} \\
    \space \space \space \sing_3^{jr}(\mathcal{C})\setminus C_i, \sing_4^{jr}(\mathcal{C})\setminus C_i,\dots, \sing_{k-1}^{jr}(\mathcal{C})\setminus C_i,\label{align 2} \\
    \sing_3^{ri}(\mathcal{C})\setminus C_j,  \sing_4^{ri}(\mathcal{C})\setminus C_j,\dots,  \sing_{k-1}^{ri}(\mathcal{C})\setminus C_j.\label{align 3}
 \end{align}
 \vspace{2mm}
 

 If at least one set in $\eqref{align 1},\eqref{align 2}$ or $\eqref{align 3}$ is non-empty, say for example, $ \sing_a^{ij}(\mathcal{C}) \setminus C_r \neq {\emptyset}$ for some $a \in \{3, \dots ,k-1\}$, then there exist $q_1, q_2,\dots , q_{a-2} \in \{1,\dots , k\} \setminus \{i, j, r\}$ with $q_1 < q_2 < \dots < q_{a-2}$ such that $C_{ijq_1q_2 \cdots q_{a-2}} \neq \emptyset$. 
 
 Note that in this case, both $C_{jr}$ and $C_{ri}$ are empty and $C_{ijq_1q_2 \cdots q_{a-2}} \neq \emptyset$. Let $p_{ij}^{\#} \in C_{ijq_1q_2 \cdots q_{a-2}}$. We prove by contradiction. Suppose $G$ has no induced $\mathbf{C}_6$ corresponding to $C_i, C_j$ and $C_r$. We run the same process as in \textbf{Subcase II}. We provide some details for the sake of completion.
 
Observe that all points of $\sing(\mathcal{C}) \cap C_j \cap C_r$ come from either list $\eqref{align 2}$ or $\eqref{list-1}$. If some sets of list $\eqref{align 2}$ are non-empty, say, $\sing_b^{jr}(\mathcal{C}) \setminus C_i \neq \emptyset$ for  some $b \in \{3, \dots , k-1\},$ choose a point $p_{jr}^{\#}  \in \sing_b^{jr}(\mathcal{C}) \setminus C_i$. If there is a point $p_{ri}^{\#} \in (\sing(\mathcal{C})\cap C_r \cap C_i) \setminus (\sing(\mathcal{C}) \cap C_j)$, then $p_{ij}^{\#}$, $p_{jr}^{\#}$ and $p_{ri}^{\#}$ along with the three curves $C_i, C_j, C_r$ correspond to an induced $\mathbf{C}_6$ in $G$, contrary to the hypothesis.
 
 So, all the points of $\sing({\mathcal{C}}) \cap C_r \cap C_i$ come from list $\eqref{list-1}$. Since $C_{ijq_1q_2 \cdots q_{a-2}} \neq \emptyset$ and $r \notin \{q_1, \dots, q_{a-2}\}$, $|\sing(\mathcal{C}) \cap C_r \cap C_i| < d^2$ as in \textbf{Subcase I}, a contradiction. Similarly, if all the sets of list $\eqref{align 2}$ are empty, then we arrive at a contradiction by showing that $|\sing(\mathcal{C}) \cap C_j \cap C_r| < d^2.$

Now, we are left with the case where all the sets of $\eqref{align 1},\eqref{align 2}$ and $\eqref{align 3}$ are empty. Hence, all points of $\sing(\mathcal{C}) \cap C_i \cap C_j$, $\sing(\mathcal{C}) \cap C_j \cap C_r$ and $\sing(\mathcal{C}) \cap C_r \cap C_i$ come from $\eqref{list-1}$.
 
 Next, we consider $C_i, C_j$ and $C_{r_1}$ for $r_1 \in \{1,\dots ,k\} \setminus \{i ,j ,r \}$ and proceed as above. If $C_{ir_1}$ or $C_{jr_1}$ is non-empty, then we again proceed as in \textbf{Case I}.
  If both $C_{ir_1}$ and $C_{jr_1}$ are non-empty, then we have $C_{jr_1},C_{ir_1} \neq \emptyset$ and $C_{ij} = \emptyset$, which is same as \textbf{Subcase I}. If $C_{ir_1}$ is non-empty and $C_{jr_1}$ is empty, then we have $C_{ir_1} \neq \emptyset, C_{jr_1}, C_{ij} = \emptyset$, which is same as \textbf{Subcase II}. In either case, $G$ has an induced $\mathbf{C}_6$ corresponding to $C_i, C_j$ and $C_{r_1}$.
 
So, we consider the case when $C_{ij}$, $C_{jr_1}$ and $C_{ir_1}$ are all empty and examine the following lists:

 \begin{align}
     \sing_3^{ij}(\mathcal{C}) \setminus C_{r_1}, \sing_4^{ij}(\mathcal{C}) \setminus C_{r_1},\dots , \sing_{k-1}^{ij}(\mathcal{C}) \setminus C_{r_1},\label{align 4} \\
    \space \space \space \sing_3^{jr_1}(\mathcal{C})\setminus C_i, \sing_4^{jr_1}(\mathcal{C})\setminus C_i,\dots, \sing_{k-1}^{jr_1}(\mathcal{C})\setminus C_i,\label{align 5} \\
    \sing_3^{r_1i}(\mathcal{C})\setminus C_j,  \sing_4^{r_1i}(\mathcal{C})\setminus C_j,\dots,  \sing_{k-1}^{r_1i}(\mathcal{C})\setminus C_j.\label{align 6}
 \end{align}
\vspace{2mm}

 If at least one set in $\eqref{align 4}$, $\eqref{align 5}$ or $\eqref{align 6}$ is non-empty, then we are done as in the case of $C_i, C_j$ and $C_r$. If not, all points of $\sing(\mathcal{C}) \cap C_i \cap C_j$, $\sing(\mathcal{C}) \cap C_j \cap C_{r_1}$ and $\sing(\mathcal{C}) \cap C_{r_1} \cap C_i$ come from the list

 \begin{align}\label{align 7}
     \sing_3^{ijr_1}(\mathcal{C}), \sing_4^{ijr_1}(\mathcal{C}),\dots, \sing_{k-1}^{ijr_1}(\mathcal{C}).
 \end{align}

Since all points of $\sing(\mathcal{C}) \cap C_i \cap C_j$ come from \eqref{align 7}, all $3$-fold points of $\sing(\mathcal{C}) \cap C_i \cap C_j$ come from $C_{ijr_1}$. So, $C_{ijr} = \emptyset$ i.e., $\sing^{ijr}_3(\mathcal{C}) = \emptyset$. As all points of $\sing(\mathcal{C}) \cap C_i \cap C_j$ also come from \eqref{list-1}, we observe that all $4$-fold points of $\sing(\mathcal{C}) \cap C_i \cap C_j$ come from $\sing_4^{ijr}(\mathcal{C}) \cap \sing_4^{ijr_1}(\mathcal{C}) = C_{ijrr_1}$. From this, we cannot immediately ensure the existence of an induced $\mathbf{C}_6$. Hence, we go onto the next step and consider $C_i, C_j$ and $C_{r_2}$ for $r_2 \in \{1, \dots, k\} \setminus \{i, j, r, r_1\}$ and proceed.
If we get an induced $\mathbf{C}_6$ corresponding to $C_i, C_j, C_{r_2}$, we stop. If not, as earlier, all points of $\sing(\mathcal{C}) \cap C_i \cap C_j$, $\sing(\mathcal{C}) \cap C_j \cap C_{r_2}$ and $ \sing(\mathcal{C}) \cap C_{r_2} \cap C_i$ come from the list

\begin{align}\label{align 8}
 \sing_4^{ijr_2}(\mathcal{C}), \sing_5^{ijr_2}(\mathcal{C}),\dots, \sing_{k-1}^{ijr_2}(\mathcal{C}).
 \end{align}

Here, in \eqref{align 8}, $\sing_3^{ijr_2}(\mathcal{C}) = C_{ijr_2} = \emptyset$, since all $3$-fold points of $\sing(\mathcal{C}) \cap C_i \cap C_j$ come from $C_{ijr_1}$ and $r_2 \neq r_1$. Also, from list \eqref{align 8}, we see that all $4$-fold points of $\sing(\mathcal{C}) \cap C_i \cap C_j$ come from $\sing_4^{ijr_2}(\mathcal{C}).$ Thus, $C_{ijrr_1} = \emptyset$ i.e., $\sing^{ijr}_4(\mathcal{C}) = \emptyset$. Similarly, we can see that all $5$-fold points of $\sing(\mathcal{C}) \cap C_i \cap C_j$ come from $C_{ijrr_1r_2}$.
\vspace{2mm}

Continuing the above process, if we get an induced $\mathbf{C}_6$ coming from $C_i, C_j, C_{r_m}$ for some $r_{m} \in \{1,\dots , k\} \setminus \{i ,j ,r,r_1, \dots, r_{m-1}\}$, we are done. If not, we get 
sequences like $\eqref{list-1}$, $\eqref{align 7}$, $\eqref{align 8}$ and so on. These sequences in each step reduce the number of non-empty sets in $\eqref{list-1}$ and finally makes all the sets in $\eqref{list-1}$ empty, a contradiction to the fact that all points of $\sing(\mathcal{C}) \cap C_i \cap C_j$ come from \eqref{list-1}.




 \end{proof}
 We now list down some observations on whether the conclusion of Theorem \ref{thm: C_6-containment for d-arrangement} holds or not when $t_k \neq 0.$
 
 \begin{remark}\label{no cycles lines}
     Let $\mathcal{C} = \{C_1, \dots, C_k\}$ be a $d$-arrangement of $k \geq 3$ curves with $t_k \neq 0$.\\
     If $d=1$ i.e., $\mathcal{C}$ is a pencil of lines, the associated Levi graph $G$ is a star graph that does not have an induced $\mathbf{C}_6.$\\
For $d=2$ i.e., when $\mathcal{C} = \{C_1, \dots, C_k\}$ is a $2$-arrangement of $k \geq 3$ conics with $t_k \neq 0$, we give an example below where the associated Levi graph $G$ does not have an induced $\mathbf{C}_6$.

 \vspace{2mm}
\begin{itemize}
\item Let $\mathcal{C}$ be a pencil of $k \geq 3$ smooth conics in $\mathbb{P}_\mathbb{C}^2$ i.e., an arrangement of conics satisfying  the following intersection data:  $t_k = 4$ and $t_i = 0$ for $i < k$. Since each conic passes through all $4$ points, the associated Levi graph $G$ does not have an induced $\mathbf{C}_6$.

\end{itemize}

But for $2$-arrangements with $t_k \neq 0$ we cannot always make the conclusion that the associated Levi graph $G$ does not have an induced $\mathbf{C}_6$. We discuss an example below of a $2$-arrangement with $t_k \neq 0$ whose associated Levi graph $G$ does have an induced $\mathbf{C}_6$.

\vspace{2mm}
\begin{itemize}
    
\item
Consider the Cremona-Klein configuration $\mathcal{K} = \{C_1, \dots, C_{21}\}$ of $21$ conics with the following data as in \cite[Section 4.1]{PT16}:
$$ t_3= 28, t_4 = 21, t_{21} = 3. $$
We have on each conic $C_i \in \mathcal{K}$, $4$ triple points, $4$ quadruple points and $3$ points with multiplicity $21$. To construct a $\mathbf{C}_6$ in $G$, choose $i_1 \in \{2, \dots, 21\}$ such that $C_1 \cap C_{i_1}$ contains a triple point and let $p_1 \in C_{1i_1j_1}$ be such an intersection point for some $j_1 \in \{2, \dots, 21\} \setminus \{i_1\}$. Now, let us choose $i_2 \in \{2, \dots, 21\} \setminus \{i_1,j_1\}$ and let $p_2$ be a triple or quadruple point in $C_{i_1} \cap C_{i_2}$. Since $C_1 \cap C_{i_1}$ consists of $3$ points of multiplicity $21$ and one triple point $p_1$, $p_2$ does not lie on $C_1$. Similarly, we can get $p_3 \in C_1 \cap C_{i_2}$ which does not lie on $C_{i_1}$. Therefore, $C_1, C_{i_1}, C_{i_2}$ along with intersection points $p_i$ for $i=1, 2, 3$ ensure the existence of an induced $\mathbf{C}_6$ in the associated Levi graph $G$. 
\end{itemize}
 \end{remark}

     


We now study the existence of an induced cycle of maximum length in Levi graphs associated to some $d$-arrangements.  

 \begin{theorem}\label{ind-cyc}
Let $\mathcal{C} = \{ C_1,\dots , C_k \}$ be a $d$-arrangement of $k \geq 4$ smooth curves of degree $d$ in $\mathbb{P}^2_{\mathbb{C}}$ with $t_2 \neq 0$ and $t_r =0$ for all $r>2$.
Then the associated Levi graph $G$ of $\mathcal{C}$ has an induced $\mathbf{C}_{2k}.$
 \end{theorem}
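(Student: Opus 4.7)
The plan is to exhibit an induced $\mathbf{C}_{2k}$ directly by choosing one intersection point from each consecutive pair in a cyclic ordering of the curves. The key simplification comes from the hypothesis that $t_r = 0$ for all $r > 2$: every intersection point is a $2$-fold point, so each point in $\sing(\mathcal{C})$ lies on exactly two curves.

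First, I would observe that for any distinct $i, j \in \{1, \ldots, k\}$, B\'ezout's theorem gives $|C_i \cap C_j| = d^2 \geq 1$, and since all singularities are $2$-fold, this intersection is exactly the set $C_{ij}$ (no higher multiplicity points can exist). Thus $C_{ij} \neq \emptyset$ for every pair $\{i,j\}$.

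For each $i \in \{1, 2, \ldots, k-1\}$ choose a point $p_i \in C_{i,\, i+1}$, and choose $p_k \in C_{k,1}$. Consider the closed walk in $G$
\[
C_1,\; p_1,\; C_2,\; p_2,\; C_3,\; \ldots,\; C_{k-1},\; p_{k-1},\; C_k,\; p_k,\; C_1,
\]
which alternates between the two parts of the bipartition and has length $2k$. The $k$ curve-vertices are obviously distinct, and the point-vertices $p_1, \ldots, p_k$ are pairwise distinct because each $p_i$ is a $2$-fold point lying on the specific pair $\{C_i, C_{i+1}\}$ (indices modulo $k$), and distinct pairs of curves cannot share a $2$-fold intersection point. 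Hence the walk visits $2k$ distinct vertices and is a genuine $2k$-cycle.

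To show this cycle is induced, I would check that there are no chords. Since $G$ is bipartite, no two curve-vertices and no two point-vertices are adjacent, so any chord would be an edge between some $p_i$ and some $C_j$ with $j \notin \{i, i+1\}$. But $p_i$ is a $2$-fold point lying only on $C_i$ and $C_{i+1}$, so its only neighbors in $G$ are precisely $C_i$ and $C_{i+1}$, which are also its two neighbors in the cycle. Therefore no chords exist and the cycle is induced. There is no real obstacle here; the hypothesis $t_r = 0$ for $r > 2$ does all the work by forcing each point-vertex to have degree exactly $2$ in $G$, which automatically rules out chords from point-vertices and thus (by bipartiteness) any chord at all.
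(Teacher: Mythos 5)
Your proposal is correct and follows essentially the same route as the paper: the hypothesis $t_r=0$ for $r>2$ forces every $C_{ij}$ to be non-empty, one chooses $p_i\in C_{i,i+1}$ cyclically, and the resulting $2k$-cycle is induced because each $2$-fold point has degree exactly $2$ in the bipartite Levi graph. In fact your write-up spells out the chord-freeness verification more explicitly than the paper does.
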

 \begin{proof}
 Since all intersection points of  $\mathcal{C}$ are double points, $C_{ij} \neq \emptyset$ for all $ i, j \in \{1, \dots, k \}$ with $i < j$. We consider points $p_{ii+1} \in C_{ii+1}$ for $i = 1, \dots, k-1$ and $p_{kk+1} \in C_{k1}$. Then $p_{ii+1}$ for $i=1, \dots, k$ along with curves $C_i$ for $i =1, \dots, k $ correspond to an induced $\mathbf{C}_{2k}$ in $G$.
 
 \end{proof}
 \vspace{2mm}
 
We are unable to give an example of a $d$-arrangement $\mathcal{C}$ of $k\geq 4$ curves with $t_r > 0$ for some $r > 2$ having an induced $\mathbf{C}_{2k}$ in the associated Levi graph $G$ of $\mathcal{C}$.\\

So, it is natural to ask the following question : 
     \begin{question}
         Does there exist a $d$-arrangement of $k\geq 4$ curves in $\mathbb{P}_\mathbb{C}^2$ with $t_r>0$ for some $r>2$ whose associated Levi graph $G$ has an induced cycle of maximum length?
     \end{question}

\vspace{2mm}

Now, we examine the existence of an induced $\mathbf{C}_{6}$ in the Levi graphs associated to conic-line arrangements with ordinary singularities in $\mathbb{P}_{\mathbb{C}}^2$. We use the same idea of the proof as in Theorem \ref{thm: C_6-containment for d-arrangement} to show the existence.

Let $\mathcal{CL} =\{\ell_1,\dots,\ell_n,C_1,\dots,C_k\}\subseteq \mathbb{P}_{\mathbb{C}}^2$ be a  conic-line arrangement in $\mathbb{P}_{\mathbb{C}}^2$ and $H$ be the associated Levi graph. We denote for $s_1 ,\dots, s_i \in \{1,\dots ,n\}$ with $s_1 <\dots<s_i$, $q_1 ,\dots,q_{j} \in \{1,\dots,k\}$ with $q_1 < \dots< q_j$ and for distinct $i,j,r \in \{1,2, \ldots,k\}$, the following notations:

\vspace{2mm}
\begin{enumerate}
    \item $\ell_{s_1s_2\cdots s_i}C_{q_1q_2\cdots q_j}$: Set of $(i+j)$-fold points in $\sing(\mathcal{CL})$ where exactly $\ell_{s_1}, \ell_{s_2}, \cdots, \ell_{s_i}$ and $ C_{q_1},C_{q_2}, \cdots, C_{q_j}$ from $\mathcal{CL}$ meet.\\
    
    \item $\sing_{i+j}{(\mathcal{CL})}$: Set of all $(i+j)$-fold points in  $\sing{(\mathcal{CL})}$ where $i$ lines from $\mathcal{CL}$ and $j$ conics from $\mathcal{CL}$ meet i.e.,
\[\sing_{i+j}{(\mathcal{CL})}= \bigcup_{\begin{smallmatrix}    q_1 < \cdots < q_j & \\ s_1 <\dots<s_i\end{smallmatrix}}\ell_{s_1s_2\cdots s_i}C_{q_1q_2\cdots q_j}.\]  
    \item  $\sing^{ijr}_{i+j}{(\mathcal{CL})}$: Set of all points in $\sing_{i+j}{(\mathcal{CL})} \cap C_i \cap C_j \cap C_r$ i.e.,
\[ \sing^{ijr}_{i+j}{(\mathcal{CL})}= \bigcup_{\begin{smallmatrix}  q_1,\dots ,q_{j-3} \in \{1, \dots, k\} \setminus\{i,j,r\} & \\  q_1  < \cdots < q_{j-3} & \\ s_1 <\dots<s_i\end{smallmatrix}}\ell_{s_1s_2\cdots s_i}C_{ijr q_1q_2\cdots q_{j-3}}.\]  
\end{enumerate}



As in the case of $d$-arrangements, we write the vertices appearing in the induced cycle in terms of the intersection points and curves directly (as in Figure \ref{fig: Cycle2}, for example), instead of using the notations of $V(H)$ as in Definition \ref{df: conic line-arr}.

\begin{theorem}\label{thm: C_6-containment for conic-line arrangement}
  Let $\mathcal{CL} =\{\ell_1,\dots,\ell_n,C_1,\dots,C_k\}\subseteq \mathbb{P}_{\mathbb{C}}^2$ be a  conic-line arrangement in $\mathbb{P}_{\mathbb{C}}^2$ such that $t_{n+k} = 0$ with $n\geq3$ and $k\geq3.$ Then the associated Levi graph $H$ contains an induced $\mathbf{C}_{6}$.
 
   \end{theorem}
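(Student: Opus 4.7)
The plan is to give a short two-case argument that is cleaner than a full mimicry of Theorem \ref{thm: C_6-containment for d-arrangement}, by splitting on whether the $n\geq 3$ lines of $\mathcal{CL}$ share a common point; the underlying philosophy of locating three curves in $\mathcal{CL}$ whose pairwise intersections witness a hexagon in $H$ is preserved.

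\textbf{Case A: The lines $\ell_1,\dots,\ell_n$ are not all concurrent.}
I would first invoke the standard classical fact that three non-concurrent lines must then exist: if every three lines from a family of $n\geq 3$ were concurrent, then fixing any two of them (which meet at a unique point $p$) would force every third line through $p$, making all $n$ lines concurrent. Choose three such non-concurrent lines $\ell_{s_1},\ell_{s_2},\ell_{s_3}$ with pairwise intersection points $p_{12},p_{23},p_{13}\in\sing(\mathcal{CL})$. These three points are pairwise distinct, and each lies outside the remaining line (otherwise the three lines would be concurrent at that point). Combined with the bipartiteness of $H$, this forces the six vertices $\{\ell_{s_1},\ell_{s_2},\ell_{s_3},p_{12},p_{23},p_{13}\}$ to induce a $\mathbf{C}_6$ in $H$.

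\textbf{Case B: All lines $\ell_1,\dots,\ell_n$ pass through a single point $q\in\sing(\mathcal{CL})$.}
Here I would use the hypothesis $t_{n+k}=0$: if every conic $C_j\in\mathcal{CL}$ passed through $q$, then $q$ would have multiplicity $n+k$, contradicting $t_{n+k}=0$. So pick a conic $C_j$ with $q\notin C_j$. For any two lines $\ell_{s_1},\ell_{s_2}$ (each through $q$), B\'ezout's theorem together with the ordinary-singularity condition gives two distinct intersection points in each of $\ell_{s_1}\cap C_j$ and $\ell_{s_2}\cap C_j$; none of these can equal $q$ (as $q\notin C_j$), and hence none can lie on the opposite line (as $\ell_{s_1}\cap \ell_{s_2}=\{q\}$). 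Picking $p^*\in\ell_{s_1}\cap C_j$ and $p^{**}\in\ell_{s_2}\cap C_j$, the six vertices $\{q,p^*,p^{**},\ell_{s_1},\ell_{s_2},C_j\}$ span the induced cycle $\ell_{s_1}-q-\ell_{s_2}-p^{**}-C_j-p^*-\ell_{s_1}$ in $H$; absence of chords follows from $q\notin C_j$, $p^*\notin\ell_{s_2}$, $p^{**}\notin\ell_{s_1}$, together with bipartiteness.

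I do not expect any serious obstacle: the classical fact about concurrent lines and the use of $t_{n+k}=0$ to rule out the single degenerate configuration are both straightforward. The only thing requiring care is the chord verification for each of the two candidate hexagons, which is immediate once one writes down the bipartition of $V(H)$ and invokes the listed non-incidences.
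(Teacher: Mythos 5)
Your proof is correct, and it takes a genuinely different and substantially simpler route than the paper's. The paper's primary case split is on whether the $k$ conics are concurrent: when they are not, it reruns the counting machinery of Theorem \ref{thm: C_6-containment for d-arrangement} on three conics (the lists $\sing_{0+3}^{ijr}(\mathcal{CL}),\dots$ and the cardinality contradiction $|\sing(\mathcal{CL})\cap C_r\cap C_i|<4$); when the conics are concurrent but the lines are not, it invokes the same machinery on three lines; only in the doubly concurrent case does it give the direct two-lines-plus-one-conic hexagon, which coincides exactly with your Case B. Your Case A instead exploits the one feature of lines that the paper does not use: two distinct lines in $\mathbb{P}^2_{\mathbb{C}}$ meet in a \emph{single} point, so for three non-concurrent lines the three pairwise intersection points are distinct and each automatically avoids the remaining line (a point of $\ell_{s_a}\cap\ell_{s_b}$ lying on $\ell_{s_c}$ would force concurrency), and by bipartiteness the induced $\mathbf{C}_6$ drops out with no counting at all. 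This is precisely the obstruction that makes the degree $d\geq 2$ case hard---two curves of degree $d$ meet in $d^2$ points, all of which could a priori lie on a third curve---and it simply does not arise for lines, so your argument also subsumes and simplifies the paper's Subcase I of Case II. What the paper's heavier approach buys is applicability when there are too few lines: Remark \ref{conic-line geq 3} extends the result to $n+k\geq 4$, including $n\leq 1$, where one is forced back onto the conics and the counting argument becomes unavoidable. Your argument needs only $n\geq 2$ together with $k\geq 1$ (Case A when some three lines are non-concurrent, Case B otherwise, where $t_{n+k}=0$ supplies a conic missing the common point), so it fully covers the theorem as stated and in fact weakens the hypothesis $k\geq 3$.
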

   \begin{proof}
  We divide the proof in two cases based on whether $\sing(\mathcal{CL}) \cap C_1 \cap \dots \cap C_k$ is non-empty or not. We deal with the case of $\sing(\mathcal{CL}) \cap C_1 \cap \dots \cap C_k = \emptyset$ in \textbf{Case I} and with the case of $\sing(\mathcal{CL}) \cap C_1 \cap \dots \cap C_k \neq \emptyset$ in \textbf{Case II}. We further divide \textbf{Case II} into two subcases depending on whether $\sing(\mathcal{CL}) \cap \ell_1 \cap \dots \cap \ell_n$ is empty or non-empty.
   
\vspace{2mm}

   \noindent \textbf{Case I:} All $k$ conics $C_1, \dots, C_k $ in $\mathcal{CL}$ do not pass through a single point.
   
 The proof works similarly to Theorem \ref{thm: C_6-containment for d-arrangement} and $H$ contains an induced $\mathbf{C}_6$ corresponding to three conics. We give some details of the proof and leave the remaining parts to the reader.
    
    Let us consider three conics $C_i, C_j$ and $C_r$ in $\mathcal{CL}$ for distinct $i,j,r \in \{1,2, \ldots,k\}$. We first assume that $C_{ij} \neq \emptyset$. If both $C_{jr}$ and $C_{ri}$ are also non-empty, then we get an induced $\mathbf{C}_6$ and we are done. If not, at least one of $C_{jr}$ and $C_{ri}$ is empty. Let $C_{ri} = \emptyset$ and $C_{jr}\neq \emptyset$. We prove the assertion by contradiction as in Theorem \ref{thm: C_6-containment for d-arrangement}. Suppose the associated Levi graph $H$ of $\mathcal{CL}$ has no induced $\mathbf{C}_{6}$ corresponding to $\{C_i, C_j,C_r\}$. If there is a point 
 $\bar{p}_{ri}$ in $\sing(\mathcal{CL}) \cap C_r \cap C_i$ that does not belong to $\sing(\mathcal{CL})\cap C_j$, then $p_{ij} \in C_{ij} , p_{jr} \in C_{jr}$ and $\bar{p}_{ri} \in \sing(\mathcal{CL}) \cap C_r \cap C_i$ along with curves $C_i, C_j, C_r$ correspond to an induced $\mathbf{C}_{6}$ in $H$- a contradiction. So, all points of $\sing(\mathcal{CL}) \cap C_r\cap C_i$ come from the following lists:
        \begin{align*}
        \sing_{0+3}^{ijr}(\mathcal{CL}), \sing_{0+4}^{ijr}(\mathcal{CL}), \dots , \sing_{0+ (k-1)}^{ijr}(\mathcal{CL}),\\
        \sing_{1+3}^{ijr}(\mathcal{CL}),  \sing_{1+4}^{ijr}(\mathcal{CL}), \dots,  \sing_{1+ (k-1)}^{ijr}(\mathcal{CL}).
        \end{align*}
        
        \vspace{2mm}
        

        Since $C_{ij} \neq \emptyset,$ proceeding similarly as in the proof of \textbf{Subcase I} of Theorem \ref{thm: C_6-containment for d-arrangement}, we can conclude that $|\sing(\mathcal{CL}) \cap C_r \cap C_i| < 4$,  a contradiction. Other cases are also very similar to the proofs of respective parts in Theorem \ref{thm: C_6-containment for d-arrangement}. We omit the details here.
        \vspace{2mm}
        
\noindent\textbf{Case II}: All $k$ conics in $\mathcal{CL}$ pass through a single point.

Now, we divide this case into two further subcases, depending on whether all the lines in $\mathcal{CL}$ pass through a point or not.

\vspace{2mm}
\noindent\textbf{Subcase I}: All $n$ lines $\ell_1, \dots , \ell_n$ in $\mathcal{CL}$ do not pass through a single point.

In this case, we consider the three lines $\ell_i, \ell_j, \ell_r$ instead of the three curves $C_i, C_j, C_r$ in $\textbf{Case I}$ and proceed similarly.

\vspace{2mm}
\noindent\textbf{Subcase II}: All lines in $\mathcal{CL}$ pass through a point.

Let $\sing(\mathcal{CL}) \cap \ell_1 \cap \dots \cap \ell_n = {p_1}$. Then $t_{n+k} = 0$ in the hypothesis implies that all the conics cannot pass through $p_1$ i.e., $ p_1 \notin \sing(\mathcal{CL}) \cap C_1 \cap \dots \cap C_k.$ Then there exists $i \in \{1, \dots, k\}$ such that $p_1 \notin C_i$. Now choose a point $p_2 \in \sing(\mathcal{CL}) \cap C_i \cap\ell_1$ and a point $p_3 \in \sing(\mathcal{CL}) \cap C_i \cap \ell_2$. 
 Since $\sing(\mathcal{CL}) \cap \ell_1 \cap \dots \cap \ell_n = {p_1}$, $p_2 \notin \sing(\mathcal{CL}) \cap \ell_2$ and  $p_3 \notin \sing(\mathcal{CL}) \cap \ell_1$. Now, $p_1,p_2,p_3$ along with $\ell_1, \ell_2, C_i$ correspond to an induced $\mathbf{C}_6$ in $H$ as in Figure \ref{fig: Cycle2}.

 \begin{figure}[H]
   \centering
\begin{tikzpicture}[scale=1.5]

\draw [line width=2pt] (-2,2)-- (-1,2);
\draw [line width=2pt] (-1,2)-- (-0.22,1.18);
\draw [line width=2pt] (-2,2)-- (-2.78,1.14);
\draw [line width=2pt] (-2.78,1.14)-- (-2.04,0.42);
\draw [line width=2pt] (-2.04,0.42)-- (-1,0.4);
\draw [line width=2pt] (-1,0.4)-- (-0.22,1.18);
\begin{scriptsize}
\fill  (-2,2) circle (1.5pt);
\draw (-2,2.3) node {$\ell_1$};
\fill  (-1,2) circle (1.5pt);
\draw (-1,2.3) node {$p_1$};
\fill  (-2.78,1.14) circle (1.5pt);
\draw (-2.8,1.4) node {$p_2$};
\fill  (-2.04,0.42) circle (1.5pt);
\draw (-2.3,0.4) node {$C_i$};
\fill  (-0.22,1.18) circle (1.5pt);
\draw (-0.15,1.4) node {$\ell_2$};
\fill  (-1,0.4) circle (1.5pt);
\draw (-0.7,0.4) node {$p_3$};
\end{scriptsize}
\end{tikzpicture}
\caption{Cycle of length $6$}
\label{fig: Cycle2}
\end{figure}
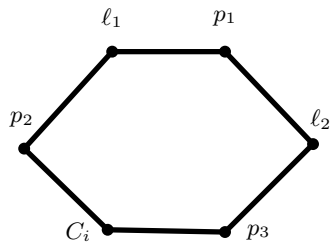


   \end{proof}

   We now give examples that illustrate the proof above.

   \begin{example}
       Let $\mathcal{CL}$ be the Hesse arrangement of $k=12$ conics and $n=9$ lines with the intersection points satisfying $t_2 = 72, t_5=12, t_9=9$. First, one can construct a $2$-arrangement $\mathcal{C}_h$ of $k=12$ conics based on Halphen pencil of index $2$ such that there are $9$ intersection points of multiplicity $8$ and $12$ intersection points of multiplicity $2$. Next take the dual Hesse arrangement $\mathcal{L}_h$ of $9$ lines whose all $12$ intersection points are of multiplicity $3$. Then we get the Hesse arrangement of conics and lines by making all the $12$ intersection points of $\mathcal{L}_h$ pass through the $12$ double points of $\mathcal{C}_h$. See \cite[Section $2.1$ and Example $6.6$]{PSCo} for details.

       It is clear from the above data that all the conics in $\mathcal{CL}$ do not pass through a single point i.e., $\sing(\mathcal{CL})\cap C_1\cap \cdots \cap C_{12}=\emptyset$. Hence, this is \textbf{Case I} of Theorem \ref{thm: C_6-containment for conic-line arrangement}. Therefore we wish to construct an induced $\mathbf{C}_6$ corresponding to three conics in $\mathcal{CL}$. Since, $\mathcal{C}_h$ is a sub $2$-arrangement of $\mathcal{CL}$, it is enough to construct an induced $\mathbf{C}_6$ in $G_h$, the associated Levi graph of $\mathcal{C}_h$.

       Let us denote the conics in $\mathcal{C}_h$ by $Q_i(\lambda)$ for $i =1, \ldots ,12$ and the intersection points of multiplicity $8$ by $p_j(\lambda)$ for $j=1, \ldots, 9$ as in \cite[Section $5$]{DPS23}. From the explicit description of conics given in \cite[Section $5$]{DPS23}, we see that 
       \begin{center}
            $p_1(\lambda) \in \sing(\mathcal{C}_h) \cap Q_1(\lambda) \cap Q_2(\lambda) \cap Q_3(\lambda) \cap Q_4(\lambda) \cap Q_5(\lambda) \cap Q_6(\lambda) \cap Q_7(\lambda) \cap Q_8(\lambda), $ \\
       $p_5(\lambda) \in \sing(\mathcal{C}_h) \cap Q_1(\lambda) \cap Q_3(\lambda) \cap Q_5(\lambda) \cap Q_6(\lambda) \cap Q_8(\lambda) \cap Q_9(\lambda) \cap Q_{11}(\lambda) \cap Q_{12}(\lambda),$\\
       $p_7(\lambda) \in \sing(\mathcal{C}_h) \cap Q_2(\lambda) \cap Q_3(\lambda) \cap Q_5(\lambda) \cap Q_7(\lambda) \cap Q_8(\lambda) \cap Q_9(\lambda) \cap Q_{10}(\lambda) \cap Q_{12}(\lambda).$
 \end{center}
 \vskip 2mm
       Then, $Q_1(\lambda)p_1(\lambda)Q_2(\lambda)p_7(\lambda)Q_9(\lambda)p_5(\lambda)Q_1(\lambda)$ corresponds to an induced $\mathbf{C}_6$ in $G_h$.
   \end{example}
   
\vskip 2mm
   \begin{example}
        Take four general points $P_1,P_2,P_3,P_4$ in $\mathbb{P}^2_{\mathbb{C}}$ and four conics $C_1,C_2,C_3,C_4$ in $\mathbb{P}^2_{\mathbb{C}}$ passing through them. Consider three lines, namely $\ell_1 = \overline{P_1P_4},$ $\ell_2 = \overline{P_2P_3}$ and $\ell_3 = \overline{P_2P_4}$. We have $t_2=1$, $t_5=2$ and $t_6=2$.

 Let $\ell_1$ and $\ell_2$ intersect in a point $P$. Then $P \notin \{P_1, P_2, P_3, P_4 \}$. So, $\ell_1P\ell_2P_2\ell_3P_4\ell_1$ is an induced $\mathbf{C}_6$ corresponding to $\ell_1, \ell_2$ and $\ell_3$. This is \textbf{Subcase I} of \textbf{Case II} in the proof of Theorem $5.5$.
 
Alternatively, we can construct an induced $\mathbf{C}_6$ corresponding to a conic and two lines. Let us consider $\ell_1, \ell_2$ and $C_1$. Then $C_1P_1\ell_1P\ell_2P_2C_1$ is an induced $\mathbf{C}_6$ corresponding to $\ell_1, \ell_2$ and $C_1$.\\
   \end{example}
 
\begin{remark}\label{conic-line geq 3}
In fact, Theorem \ref{thm: C_6-containment for conic-line arrangement} is true when $n+k \geq 4.$ If $n=2,$ $k >2$ or $k=2,$ $n>2,$ a similar process as in Theorem \ref{thm: C_6-containment for conic-line arrangement} may be applied to get an induced $\mathbf{C}_6$ in the associated Levi graph. If $n=k=2$, the proof is the same as \textbf{Subcase II} of Theorem \ref{thm: C_6-containment for conic-line arrangement}.
\end{remark}

\section{Bound on Castelnuovo–Mumford regularity }\label{Regu}
In this section, we investigate the regularity of binomial edge ideals of the Levi graphs coming from $d$-arrangements and conic-line arrangements. The first general lower bound for binomial edge ideals was obtained by Matsuda and Murai \cite[Corollary 2.3]{MM13}. They prove that $\reg(S/J_G)\geq \ell(G)$, where $\ell(G)$ is the length of a longest induced path in $G$. Later, Jayanthan, Kumar and the second author generalized the lower bound for powers of binomial edge ideals and proved the following:
\begin{theorem}(\cite[Corollary 3.4]{JAR2})\label{thm:lower-bound-on-regularity-of-power}
    Let $G$ be a connected graph and $\ell(G)$ be the length of a longest induced path of $G$. Then 
    $$\reg(S/J_G^t)\geq 2t+\ell(G)-2 \text{ for all }t\geq 1.$$
\end{theorem}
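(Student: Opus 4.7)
The plan is to reduce to the case where $G$ is itself an induced path of length $\ell := \ell(G)$, and then compute the regularity of powers of the corresponding binomial edge ideal directly, using its complete intersection structure. This mirrors, at the level of powers, the strategy of Matsuda--Murai for the case $t = 1$.

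\textbf{Reduction to an induced path.} I would first fix an induced path $P$ of length $\ell$ in $G$, say with vertices $V(P) = \{v_0, v_1, \ldots, v_\ell\}$, and set $S_P = \KB[x_v, y_v : v \in V(P)]$. The key inequality I want is
\[
\reg(S/J_G^t) \;\geq\; \reg(S_P/J_P^t).
\]
Because $P$ is an induced subgraph, the ideal $\mathfrak{m}_V := (x_v, y_v : v \in V(G) \setminus V(P))$ satisfies $S/(J_G^t + \mathfrak{m}_V) \cong S_P/J_P^t$; indeed, specializing $x_v = y_v = 0$ for $v \notin V(P)$ kills exactly the generators of $J_G$ involving an outside vertex, and the induced property prevents any new generators from appearing. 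I would then peel off the linear forms in $\mathfrak{m}_V$ one at a time and, using the short exact sequence
\[
0 \to \bigl(S/(J_G^t : x_v)\bigr)(-1) \xrightarrow{\,\cdot\, x_v\,} S/J_G^t \to S/(J_G^t + (x_v)) \to 0,
\]
together with the analogous sequence for $y_v$, argue that the regularity cannot drop at any step. Equivalently, one can invoke a comparison of multigraded Betti numbers showing $\beta_{i,j}(S_P/J_P^t) \leq \beta_{i,j}(S/J_G^t)$ for all $i, j$.

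\textbf{Computation for the path.} The binomial edge ideal $J_P$ of the path on $\ell+1$ vertices is a complete intersection of height $\ell$, generated by the $\ell$ quadrics $f_i = x_{v_i} y_{v_{i+1}} - x_{v_{i+1}} y_{v_i}$ for $0 \leq i \leq \ell - 1$. For a complete intersection $I$ generated by $r$ forms of a common degree $d$, a standard Koszul-type computation gives
\[
\reg(S/I^t) \;=\; dt + (r-1)(d-1) - 1.
\]
Specializing to $d = 2$ and $r = \ell$ yields $\reg(S_P/J_P^t) = 2t + \ell - 2$. Combined with the reduction above, this produces the desired lower bound $\reg(S/J_G^t) \geq 2t + \ell(G) - 2$.

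\textbf{Main obstacle.} The delicate point is the first step for $t \geq 2$. When $t = 1$, the ideal $J_G$ is radical and the induced subgraph comparison of Betti numbers is classical. For higher powers, however, $J_G^t$ is no longer radical, its associated primes and primary decomposition are genuinely more subtle, and the linear forms $x_v, y_v$ (for $v \notin V(P)$) are typically zero-divisors on $S/J_G^t$, so one cannot directly invoke the regular-sequence preservation of regularity. Overcoming this likely requires either a careful multigraded analysis of the minimal free resolution of $J_G^t$, or a degeneration argument comparing $J_G^t$ to a suitable initial ideal whose induced-subgraph behavior is easier to track.
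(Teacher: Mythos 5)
First, note that the paper does not prove this statement at all: it is quoted verbatim from \cite[Corollary 3.4]{JAR2}, so there is no in-paper argument to compare against. Judged on its own, your overall strategy (restrict to a longest induced path $P$, then use that $J_P$ is a complete intersection of $\ell$ quadrics to get $\reg(S_P/J_P^t)=2t+\ell-2$) is the right one and is essentially the strategy of the cited source. The complete intersection half is fine: the adjacent $2$-minors generating $J_P$ form a regular sequence of length $\ell=\h(J_P)$, and the formula $\reg(S/I^t)=dt+(r-1)(d-1)-1$ for an equigenerated complete intersection is standard.

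The gap is in your reduction step, and it is exactly where you suspect, but your first mechanism cannot be repaired while your second one needs no repair. Peeling off the linear forms $x_v,y_v$ ($v\notin V(P)$) via the multiplication sequence only yields $\reg(S/(J_G^t+(x_v)))\leq \max\{\reg(S/J_G^t),\ \reg(S/(J_G^t:x_v))+1\}$, and since these forms are genuinely zero-divisors on $S/J_G^t$ (already on $S/J_G$, as they lie in the minimal primes $P_T(G)$ for cutsets $T$ containing $v$), the maximum can be attained by the colon term; so ``the regularity cannot drop at any step'' is not justified and this route is a dead end. However, the multigraded Betti comparison you mention in passing is not the hard degeneration argument you fear --- it goes through verbatim for all powers. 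Give $S$ the $\mathbb{Z}^{V(G)}$-grading with $\deg x_v=\deg y_v=e_v$; then $J_G^t$ is multigraded, and for any multidegree $a$ supported on $V(P)$ one has $(J_G^t)_a=(J_P^t)_a$, because a product of $t$ edge binomials can have multidegree $\leq a$ only if all $t$ edges lie inside $V(P)$, hence in $E(P)$ since $P$ is induced. Restricting the minimal free resolution of $S/J_G^t$ to the multidegrees supported on $V(P)$ is exact and preserves minimality, so $\beta_{i,a}(S_P/J_P^t)=\beta_{i,a}(S/J_G^t)$ for all such $a$, giving $\reg(S_P/J_P^t)\leq\reg(S/J_G^t)$. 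The non-radicality of $J_G^t$ is a red herring: the Matsuda--Murai restriction argument uses only the multigrading, which survives taking powers. With that substitution your proof is complete.
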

Using Theorems \ref{thm:lower-bound-on-regularity-of-power} and \ref{thm: C_6-containment for d-arrangement}, we conclude the following as the Levi graph of the $d$-arrangement $\mathcal{C}$ has an induced $\mathbf{C}_{6}$ which contains an induced path of length $4$:
\begin{corollary}\label{cor: lower-bound-d-arrangement}
    Let $\mathcal{C} = \{ C_1,\dots , C_k\}$ be a $d$-arrangement of $k\geq 3$ curves in $\mathbb{P}^2_{\mathbb{C}}$ with $t_k = 0$. Let $G$ be the associated  Levi graph. Then 
    $$\reg(S/J_G^t)\geq 2t+2 \text{ for all }t\geq 1.$$
\end{corollary}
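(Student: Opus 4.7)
The plan is that this corollary follows almost immediately by combining the two named ingredients, so the proof proposal is essentially a short chain of implications rather than a substantial argument.

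First I would invoke Theorem \ref{thm: C_6-containment for d-arrangement}: since $\mathcal{C}$ is a $d$-arrangement of $k\geq 3$ curves with $t_k=0$, the associated Levi graph $G$ contains an induced $\mathbf{C}_6$. Let its consecutive vertices be $v_1, v_2, v_3, v_4, v_5, v_6$. Next I would extract an induced path of length $4$ (that is, with four edges and five vertices) by simply deleting one vertex of the cycle: the subgraph on $\{v_1, v_2, v_3, v_4, v_5\}$ is a path, and since the original $\mathbf{C}_6$ was induced in $G$ there are no chords among these five vertices, so this path is also induced in $G$. Hence $\ell(G) \geq 4$.

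Finally I would feed this into Theorem \ref{thm:lower-bound-on-regularity-of-power}, which asserts $\reg(S/J_G^t) \geq 2t + \ell(G) - 2$ for every $t \geq 1$. Substituting $\ell(G)\geq 4$ gives
\[
\reg(S/J_G^t) \;\geq\; 2t + 4 - 2 \;=\; 2t + 2,
\]
as claimed.

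There is essentially no obstacle here beyond the elementary observation that an induced $6$-cycle contains an induced $5$-vertex path; all the genuine content has already been established in Theorem \ref{thm: C_6-containment for d-arrangement} (the combinatorial existence of $\mathbf{C}_6$ in the Levi graph) and in Theorem \ref{thm:lower-bound-on-regularity-of-power} (the lower bound on regularity of powers in terms of induced path length). The only small care point is to state precisely what ``length'' means so the arithmetic $\ell(G)-2 = 2$ lands correctly, which is why it is worth noting explicitly that the induced path extracted from $\mathbf{C}_6$ has four edges.
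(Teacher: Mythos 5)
Your proposal is correct and matches the paper's own justification exactly: the paper derives this corollary by noting that the induced $\mathbf{C}_6$ guaranteed by Theorem \ref{thm: C_6-containment for d-arrangement} contains an induced path of length $4$, so $\ell(G)\geq 4$, and then applies Theorem \ref{thm:lower-bound-on-regularity-of-power}. Your extra care in verifying that the $5$-vertex subpath of the induced $6$-cycle is itself induced, and in pinning down the edge-count convention for ``length,'' is a harmless elaboration of the same argument.
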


We now give improved bounds for the regularity of powers of binomial edge ideals of Levi graphs associated to certain classes of arrangements.
\begin{corollary}\label{cor:lower bound-improved}
Let $\mathcal{C} = \{ C_1,\dots , C_k \}$ be a $d$-arrangement of $k \geq 4$ smooth curves of degree $d$ in $\mathbb{P}^2_{\mathbb{C}}$ with $t_2 \neq 0$ and $t_r =0$ for all $r>2$. Let $G$ be the associated Levi graph. Then
    $$\reg(S/J_G^t)\geq 2t+2k-4 \text{ for all }t\geq 1.$$
\end{corollary}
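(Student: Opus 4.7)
The plan is to mirror exactly the argument used to deduce Corollary \ref{cor: lower-bound-d-arrangement} from Theorem \ref{thm: C_6-containment for d-arrangement}, but now feeding in the stronger induced-cycle result of Theorem \ref{ind-cyc}. So the whole thing will be a short chain: induced $\mathbf{C}_{2k}$ in $G$ $\Longrightarrow$ induced path of length $2k-2$ in $G$ $\Longrightarrow$ the desired regularity bound via Theorem \ref{thm:lower-bound-on-regularity-of-power}.

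First I would verify the mild hypothesis needed to apply Theorem \ref{thm:lower-bound-on-regularity-of-power}, namely that the Levi graph $G$ is connected. Under the standing assumption $t_r=0$ for all $r>2$, each pair of curves $C_i,C_j$ meets in $d^2$ ordinary double points by B\'ezout, so the vertices $y_i$ and $y_j$ share $d^2$ common neighbors in $G$; in particular every pair of curve-vertices lies in the same component, and every intersection-point vertex is adjacent to such a curve-vertex, so $G$ is connected. (This is the reason $t_2\neq 0$ and $k\geq 4$ are both needed: they ensure the combinatorial hypotheses of Theorem \ref{ind-cyc}.)

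Next I would invoke Theorem \ref{ind-cyc} to produce an induced cycle $\mathbf{C}_{2k}$ in $G$. Deleting any one vertex of this cycle yields an induced subgraph which is a path on $2k-1$ vertices, hence an induced path of length $\ell=2k-2$ in $G$. Therefore $\ell(G)\geq 2k-2$.

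Finally, Theorem \ref{thm:lower-bound-on-regularity-of-power} applied to the connected graph $G$ gives
\[
\reg(S/J_G^t)\;\geq\; 2t+\ell(G)-2\;\geq\; 2t+(2k-2)-2\;=\;2t+2k-4
\]
for every $t\geq 1$, which is the bound asserted. There is no real obstacle here: the content of the corollary is entirely packaged inside Theorem \ref{ind-cyc}, and the only thing to watch is the convention that the length of an induced path counts edges, so that an induced $\mathbf{C}_{2k}$ with one vertex removed contributes $2k-2$ (not $2k-1$) to $\ell(G)$.
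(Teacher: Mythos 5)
Your argument is exactly the paper's: invoke Theorem \ref{ind-cyc} to get an induced $\mathbf{C}_{2k}$, observe it contains an induced path of length $2k-2$, and apply Theorem \ref{thm:lower-bound-on-regularity-of-power}. The extra check that $G$ is connected is a harmless (and correct) addition that the paper leaves implicit.
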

\begin{proof}
    By Theorem \ref{ind-cyc}, $G$ contains a cycle $\mathbf{C}_{2k}$ of length $2k$. As $\mathbf{C}_{2k}$ has an induced path of length $2k-2$, the assertion follows.
\end{proof}
Similarly, using Theorems \ref{thm:lower-bound-on-regularity-of-power}, \ref{thm: C_6-containment for conic-line arrangement} and Remark \ref{conic-line geq 3}, we have the following corollary:

\begin{corollary}\label{cor: lower-bound-conic-line-arrangement}
    Let $\mathcal{CL} =\{\ell_1,\dots,\ell_n,C_1,\dots,C_k\}\subseteq \mathbb{P}^2_{\mathbb{C}}$ be a  conic-line arrangement in $\mathbb{P}_{\mathbb{C}}^2$ such that $t_{n+k} = 0$ and $n+k \geq 4.$ Let $H$ be the associated Levi graph. Then 
    $$\reg(S_H/J_H^t)\geq 2t+2 \text{ for all }t\geq 1.$$
\end{corollary}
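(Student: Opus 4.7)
The plan is to mirror the argument already used for Corollary~\ref{cor: lower-bound-d-arrangement} in the $d$-arrangement setting, transferring it to the conic-line situation by invoking the induced six-cycle guaranteed for these Levi graphs.

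First, I would use Theorem~\ref{thm: C_6-containment for conic-line arrangement} together with Remark~\ref{conic-line geq 3} to produce an induced $\mathbf{C}_{6}$ in $H$. Theorem~\ref{thm: C_6-containment for conic-line arrangement} gives this directly when $n \geq 3$ and $k \geq 3$, while Remark~\ref{conic-line geq 3} extends the conclusion to the remaining cases compatible with $n+k \geq 4$ (namely $n=2, k>2$; $k=2, n>2$; and $n=k=2$). So in all cases allowed by the hypothesis, $H$ contains an induced $\mathbf{C}_{6}$.

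Next, I would observe the purely combinatorial fact that any induced $\mathbf{C}_{6}$ in $H$ yields an induced path on $5$ vertices, i.e. of length $4$, simply by deleting one vertex from the six-cycle. Since induced subgraphs of induced subgraphs are induced in $H$, this furnishes an induced path of length at least $4$ inside $H$, so that $\ell(H) \geq 4$.

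Finally, I would plug this into Theorem~\ref{thm:lower-bound-on-regularity-of-power} applied to the (connected) Levi graph $H$: for every $t \geq 1$,
\[
\reg(S_H/J_H^{t}) \;\geq\; 2t + \ell(H) - 2 \;\geq\; 2t + 4 - 2 \;=\; 2t + 2,
\]
which is the desired bound. There is essentially no obstacle here, since all substantive work has been carried out in Theorem~\ref{thm: C_6-containment for conic-line arrangement} and Theorem~\ref{thm:lower-bound-on-regularity-of-power}; the only small point to double-check is that Theorem~\ref{thm:lower-bound-on-regularity-of-power} requires $H$ to be connected, which follows because $\mathcal{CL}$ is a connected incidence structure (every line meets every conic by B\'ezout, and the hypothesis $t_{n+k}=0$ prevents degenerate disconnected configurations).
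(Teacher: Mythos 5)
Your proposal is correct and follows exactly the route the paper takes: the paper derives this corollary in one line from Theorem \ref{thm:lower-bound-on-regularity-of-power}, Theorem \ref{thm: C_6-containment for conic-line arrangement} and Remark \ref{conic-line geq 3}, using precisely the observation that an induced $\mathbf{C}_6$ contains an induced path of length $4$, so $\ell(H)\geq 4$. Your additional check that $H$ is connected (needed to apply Theorem \ref{thm:lower-bound-on-regularity-of-power}) is a harmless and valid extra detail that the paper leaves implicit.
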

Now, we see an example of a Levi graph associated with a $d$-arrangement for which the regularity of powers of the binomial edge ideal attains the lower bound obtained in Corollary \ref{cor: lower-bound-d-arrangement}.
\begin{example}
    We consider the Levi graph $G=G_3$ associated with the Hirzebruch quasi-pencil of $k=3$ lines as in Section \ref{subs-hir}. In this case, $G$ is same as the cycle $\mathbf{C}_{6}$. Therefore, by \cite[Corollary 16]{ZZ13}, $\reg(S/J_G)=4$ and by \cite[Theorem 3.6]{JAR2}, $\reg(S/J_G^t)=2t+2$ for all $t\geq 2$.
\end{example}
In this context, we propose the following question:
\begin{question}
    Can we characterize Levi graphs $G$ associated with $d$-arrangements or conic-line arrangements such that $\reg(S/J_G^t)=2t+2$ for all $t\geq 1$?
\end{question}
       
\bibliographystyle{plain}
\bibliography{Reference}
\end{document}